\documentclass{amsart}
\usepackage{graphicx}
\usepackage{amssymb}
\usepackage{graphicx}
\usepackage{amsfonts}
\usepackage{amsmath}
\usepackage{amsthm}
\usepackage{esint}
\usepackage{mathabx}
\usepackage{mathtools}
\usepackage{nccmath}
\usepackage[colorlinks]{hyperref}
\hypersetup{colorlinks=true, pdfstartview=FitV, linkcolor=blue,
            citecolor=blue, urlcolor=blue}  

\usepackage{scrextend,tikz-cd}  
\usepackage{fancyhdr}

\usepackage{enumitem}

\usepackage{tikz}
\usepackage{pgfplots}
\usepackage[mathscr]{euscript}
 \let\mathscr\relax
\usepackage[scr]{rsfso}
\usepackage{multicol}
\theoremstyle{remark}
\newtheorem*{remark}{Remark}

\newcommand{\tensor}{\otimes}

\newcommand{\bR}{\mathbb{R}}

\newcommand\restr[2]{{
  \left.\kern-\nulldelimiterspace 
  #1 
  \vphantom{\big|} 
  \right|_{#2} 
  }}
\theoremstyle{definition}

\newtheorem{theorem}{Theorem}[section]
\newtheorem{lemma}[theorem]{Lemma}

\newcommand{\ep}{\epsilon}

\newtheorem{thm}{Theorem}[section]
\newtheorem{cor}[thm]{Corollary}

\theoremstyle{definition}

\theoremstyle{remark}

\theoremstyle{remark}

\begin{document}

\title[]{Geometric Viewpoint of $L^\infty$ Estimates for Linearized Monge-Amp\`ere Equations}

\author{Arghya Rakshit}
\address{Department of Mathematics, University of Toronto}
\email{\tt arghya.rakshit@utoronto.ca}
\author{Derek Wu}
\address{Department of Mathematics, University of Toronto}
\email{pangolin.wu@mail.utoronto.ca}
\subjclass[2020]{
49Q22, 
53C50, 
35B45, 
35J75, 
35J96. 
}

\keywords{Optimal Transportation, Semi-Riemannian Geometry, Maximal Submanifolds, Monge-Amp\`ere Equations, Moser Iteration, Harnack Inequality}
\date{}

\begin{abstract}
We prove a Harnack inequality on Monge--Amp\`ere gradient graphs
under suitable geometric bounds, giving us interior $L^\infty$
estimates for the linearized Monge--Amp\`ere equation.
The proof combines Moser iteration on extrensic balls with a gradient estimate for the extrinsic distance function.
The argument avoids the Caffarelli--Guti\'errez Harnack inequality and the engulfing and covering properties of Monge--Amp\`ere sections.
\end{abstract}

\maketitle
\section{Introduction}
The interior regularity theory of Caffarelli and Gutiérrez for the linearized Monge– –Ampère equation \cite{CG96, caffarelli1997properties} is a landmark result in the theory of Monge–Ampère equations. They work with Monge-Ampère sections and assume that the Hessian determinant is bounded away from $0$ and $\infty$. Tian and Wang (see \cite{tianwang2008}) proved the Monge-Ampère Sobolev inequality. In \cite{le2016}, Le studied the Green's function of the linearized Monge--Amp\`ere operator and obtained sharp bounds for it; see also Gu--Le~\cite{gule2026} for a Green's function approach to the regularity of linearized Monge--Amp\`ere equations in divergence form. More recently, Cui, Gu, Le, Wang and Zhou~\cite{cuigulewangzhou2026}
established interior H\"older estimates for linearized Monge--Amp\`ere
equations with bounded divergence-form data in dimensions $n\ge3$,
under two-sided positive bounds for the Hessian determinant.

The geometric study of Monge--Amp\`ere equations goes back to
Calabi~\cite{calabi1958improper}. Kim, McCann and Warren~\cite{kim2010pseudo} showed that smooth optimal transport
graphs are calibrated, hence locally volume-maximizing, spacelike
submanifolds for a pseudo-Riemannian metric built from the cost and the densities. For the quadratic cost and uniform densities this metric is flat. Related geometric methods have been used for Hessian estimates and interior regularity \cite{brendle2024geometric,yuan2023monotonicity,shankar2024regularity}, and for boundary regularity of optimal transport maps \cite{collinstong2025}. 

In this paper, we use this viewpoint to obtain interior $L^\infty$ and Harnack estimates for the linearized Monge--Amp\`ere equation in the constant-density case. We perform Moser iteration on extrinsic balls and use a  Monge-Ampère quasi-distance gradient estimate, following Cheng--Yau~\cite{chengyaumaximal} and
Jost--Xin~\cite{jost2001aspectsglobalgeometryentire} to control the geometry of the extrinsic balls.

The study of Monge-Amp\`ere equations using the extrinsic distance was also studied by Maldonado and Ranabhat in \cite{maldonadoranabhat2020}. In that paper the authors use the quasi-distance along with Caffarelli-Guti\'errez theory to develop improvements, our result demonstrates an alternative formulation of Caffarelli-Guti\'errez theory using geometric methods.

The proof does not use the Caffarelli--Guti\'errez Harnack inequality or the engulfing and covering properties of Monge--Amp\`ere sections. Let us fix some notations before we state our main results:\\
For $n\geq 2$, let $\Omega\subset \mathbb{R}^n,$ be a bounded convex domain, and $u\in C^2(\Omega)$ be a convex solution of
\[
\det D^2u=f\quad \text{in}\quad \Omega.
\]
with $0<\lambda\leq f\leq\Lambda<\infty$.
We write $(u^{ij})=(D^2u)^{-1}$ and $U^{ij}=fu^{ij}$ for the cofactor matrix of $D^2u$. The linearized Monge-Ampère operator  is
\begin{equation}\label{eq:LMA}
    L_uw=U^{ij}w_{ij}=\partial_i (U^{ij}w_j).
\end{equation}
Since our estimates are interior, we impose no boundary condition.   

For $x_0\in\Omega$, define the symmetrized Monge--Amp\`ere
quasi-distance by
\begin{equation}\label{intro:z}
    z_{x_0}(x)=(x-x_0)\cdot\bigl(Du(x)-Du(x_0)\bigr),
    \qquad
    B_R(x_0)=\{x\in\Omega:z_{x_0}(x)<R^2\}.
\end{equation}
This function is nonnegative and vanishes only at $x_0$.
After translating $x_0$ to the origin and subtracting the supporting
affine function, it becomes $z(x)=x\cdot Du(x)$.
We distinguish $B_R(x_0)$ from the intrinsic ball
$B_R^g(x_0)$ for the Hessian metric
\begin{equation}\label{intro:hessian}
    g=\sum_{i,j=1}^n u_{ij}\,dx_i\otimes dx_j,
    \qquad dV_g=\sqrt f\,dx.
\end{equation}
We denote the intrinsic distance by $d$ and the gradient with
respect to the induced metric by $\nabla$. The latter is also
written as $\nabla_u$ when we wish to indicate its dependence on $u$.\\
Below $M=\{(x,Du(x)):x\in \Omega\}$. The main theorem we prove is: \begin{theorem}\label{main}
    Let $w \in C^2(B_R)$ and $\phi \in L^\infty(B_{R})$ and $w$ be
    non-negative. Assume
    \[
        \operatorname{div}_u(A(x)\nabla_u w)=-\phi,
    \]
    where $\operatorname{div}_u$ and $\nabla_u$ are the divergence
    and gradient operators on $(M,u_{ij})$, and $A(x)$ is self-adjoint
    and uniformly elliptic with
    \[
        \lambda_0 I\le A(x)\le \Lambda_0 I.
    \]
    Then
    \[
        \sup_{B_{R/2}} w
        \le C\left(
        \inf_{B_{R/2}} w
        +R^2\|\phi\|_{L^\infty(B_R)}
        \right),
    \]
    where
    \[
        C=C\bigl(n,  \lambda,\Lambda,\lambda_0,\Lambda_0,\operatorname{Vol}(M),\operatorname{diam}(M),
        \lVert H\rVert_{L^\infty}, \lVert \nabla H \rVert_{L^\infty}\bigr).
    \]
\end{theorem}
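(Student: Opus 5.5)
We follow the De Giorgi--Nash--Moser scheme, run on the extrinsic balls $B_R$ inside the manifold $(M,g)$, where $g$ is the Hessian metric \eqref{intro:hessian}. In the smooth setting of Kim--McCann--Warren, $M$ is a maximal (volume-maximizing) spacelike graph in $\mathbb{R}^n\times\mathbb{R}^n$ with the flat metric $\tfrac12\sum dx_i\,dy_i$. Its mean curvature $H$ is the obstruction coming from $\log f$. There are two geometric inputs.

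\textbf{Step 1: Hessian estimate for $z$.} Take $z=z_{x_0}=(x-x_0)\cdot(Du-Du(x_0))$. This is the restriction to $M$ of the ambient quadratic form $\langle X-X_0,X-X_0\rangle$, so the standard computation for submanifolds gives
\[
\Delta_g z=2n+2\langle X-X_0,\vec H\rangle .
\]
The error term is bounded by $C\|H\|_{L^\infty}\sqrt{z}$ plus terms controlled by $\operatorname{diam}(M)$.

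\textbf{Step 2: gradient estimate for $z$.} We aim for $|\nabla z|^2\le C z$ on $B_R$, following Cheng--Yau and Jost--Xin. We apply the maximum principle to $Q=|\nabla z|^2/z$, cut off by $(R^2-z)^2$. The Bochner formula on $M$ enters through the Gauss equation. In the pseudo-Riemannian setting the Ricci curvature of a spacelike maximal submanifold has a favorable sign, up to terms involving $H$. The terms $\nabla\Delta z$ then bring in $\nabla H$, which is why $\|\nabla H\|_{L^\infty}$ appears in the constant.

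Together, Steps 1 and 2 supply the cutoff functions $\eta=\psi(z/R^2)$ with $|\nabla\eta|\le C/R$. They also give a volume comparison: integrating $\Delta z$ over $B_r$ and using the co-area formula yields
\[
\operatorname{Vol}(B_r)/r^n \text{ almost monotone},
\]
hence a doubling property $|B_{2r}|\le C|B_r|$. The lower bound on volume ratios uses $\operatorname{Vol}(M)$ and $\operatorname{diam}(M)$ for normalization.

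\textbf{Step 3: Sobolev and Poincar\'e inequalities on extrinsic balls.} For the Sobolev inequality we use the Michael--Simon inequality for submanifolds of the flat ambient space; this is where $\|H\|_{L^\infty}$ enters. In the pseudo-Euclidean setting we instead transfer through the coordinates $x$, with $dV_g=\sqrt f\,dx$ comparable to Lebesgue measure, combined with the Tian--Wang Monge--Amp\`ere Sobolev inequality, which is available as a black box. For the weak $(1,2)$ Poincar\'e inequality on $B_R$ we use a Jerison-type argument: integrate along the flow lines of $\nabla z$, which are controlled by Step 2, and combine with doubling.

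\textbf{Step 4: Moser iteration.} With $\eta$ supported in $B_r$ and $\bar w=w+R^2\|\phi\|_\infty$, we test the equation against $\eta^2\bar w^{\beta}$. Ellipticity $\lambda_0\le A\le\Lambda_0$ gives Caccioppoli inequalities. Iterating with the Sobolev inequality gives
\[
\sup_{B_{R/2}}\bar w\le C\Bigl(\textstyle\fint_{B_{3R/4}}\bar w^{p}\Bigr)^{1/p}\quad\text{for } p>0,
\]
and symmetrically
\[
\inf_{B_{R/2}}\bar w\ge C^{-1}\Bigl(\textstyle\fint\bar w^{-p}\Bigr)^{-1/p}.
\]
We bridge the two via $\log\bar w\in BMO$: testing with $\eta^2/\bar w$ and applying Poincar\'e places $\log\bar w$ in $BMO$, and John--Nirenberg on the doubling space of Step 2 gives $\fint\bar w^{p}\fint\bar w^{-p}\le C$. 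Alternatively, Bombieri--Giusti avoids John--Nirenberg entirely.

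\textbf{Main obstacle.} The gradient estimate of Step 2 is the main obstacle. The Bochner/Gauss computation in the pseudo-Riemannian ambient space must produce a favorable sign, and the $H$, $\nabla H$ error terms must be absorbed uniformly; the doubling and Poincar\'e properties all rest on it. We would first attempt it by adapting Jost--Xin's computation for $\log(R^2-z)$-type barriers.
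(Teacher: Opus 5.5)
Your overall structure matches the paper's:
\begin{itemize}
\item a Cheng--Yau/Jost--Xin maximum-principle bound $|\nabla z|^2\le Cz$, which gives cutoffs and the comparison $d\le\sqrt z\le Cd$;
\item Caccioppoli inequalities and Moser iteration for positive and negative powers;
\item $\log\bar w\in BMO$ via the test function $\eta^2/\bar w$, followed by John--Nirenberg.
\end{itemize}
Two of your choices differ from the paper but are sound. Using Tian--Wang as a black box in place of the paper's ABP/Green's-function/Riesz--Thorin Sobolev inequality is fine. So is getting doubling from $\int_{B_r}\Delta z=\int_{\partial B_r}|\nabla z|$ and the co-area formula, where the paper instead uses the section comparison $\tfrac12S_{r^2}(u,x_0)\subseteq B_r\subseteq S_{r^2}(u,x_0)$. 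Your route works because spacelike gives $|\nabla z|^2=4z+4\langle X,e_{n+1}\rangle^2\ge 4z$. Hence $\operatorname{Vol}(B_r)/r^n$ is almost \emph{non-increasing}, the reverse of Euclidean minimal-surface monotonicity, and that is what yields doubling.

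One concrete correction in Step~2: the cutoff $(R^2-z)^2$ is too weak. With a cutoff $(R^2-z)^p$, the coefficient of $|\nabla z|^4$ at the maximum point is $\frac{w^2}{4(n-1)}-w'$, where $w=\frac1z+\frac{p}{R^2-z}$. Near $\{z=R^2\}$ this coefficient is approximately $\bigl(\frac{p^2}{4(n-1)}-p\bigr)(R^2-z)^{-2}$, which is negative for $p=2$. The paper uses $\exp(-m/(c-z))$ with $m\ge 16(n-1)c$; a power $p>4(n-1)$ would also work.

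The genuine gap is the Poincar\'e inequality. Despite its title, your Step~1 controls only the trace $\Delta z$. The full Hessian
\[
z_{ij}=2\delta_{ij}+2\langle X,e_{n+1}\rangle h^{n+1}_{ij}
\]
contains the second fundamental form, i.e.\ third derivatives of $u$, and no hypothesis bounds it. Step~2, together with $|\nabla z|^2\ge 4z$, controls the \emph{length} of the flow lines of $\nabla z$, but not how they spread. To turn the path bound $|\phi(x)-\phi(x_0)|\le\int_\gamma|\nabla\phi|$ into an $L^2$ bound on $B_r$, you need a lower bound on the Jacobian of the flow. Along the flow of $V=\nabla z/|\nabla z|^2$ the volume element changes by
\[
\operatorname{div}V=\frac{\Delta z}{|\nabla z|^2}-\frac{2\nabla^2 z(\nabla z,\nabla z)}{|\nabla z|^4},
\]
and the second term contains $h^{n+1}(\nabla z,\nabla z)$. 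So the Jerison-type argument has no Jacobian control to work with.

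The paper avoids this by integrating along Euclidean segments $\gamma(t)=tx+(1-t)y$ inside a convex set $U$. Cauchy--Schwarz with $D^2u$ and $(D^2u)^{-1}$, combined with the identity $\int_0^1\gamma'^{T}D^2u(\gamma)\gamma'\,dt=(x-y)\cdot(Du(x)-Du(y))$, gives
\[
|\phi(x)-\phi(y)|^2\le (x-y)\cdot\bigl(Du(x)-Du(y)\bigr)\int_0^1 u^{ij}\phi_i\phi_j(\gamma(t))\,dt .
\]
The change of variables $x\mapsto tx+(1-t)y$ has the explicit Jacobian $t^n$ and maps $U$ into itself by convexity.

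To apply this on $B_r$, the paper proves that $\operatorname{conv}(B_r)$ has quasi-diameter at most $Cr$. The proof uses the identity
\[
\sum_i t_i(q-p_i)\cdot(Du(q)-Du(p_i))=\tfrac12\sum_{i,j}t_it_j(p_i-p_j)\cdot(Du(p_i)-Du(p_j))
\]
together with the distance comparison.

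Alternatively, the paper's standing assumption $R_{ij}\ge (n-1)k$ gives Buser's inequality on geodesic balls. Combined with $B^g_{r/C}\subseteq B_r\subseteq B^g_r\subseteq B_{Cr}$, this yields a weak Poincar\'e inequality, which is enough for John--Nirenberg.
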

One can view most of the constants $n, \lambda, \Lambda, \lambda_0, \Lambda_0, \mathrm{Vol}, \mathrm{diam}(M)$ as geometric constants coming from the domain of interest. The dependence on the supremum of $H$ and $\nabla H$ comes from a dependence on the mean curvature coming from the gradient estimate of the Monge-Amp\`ere quasi-distance.\\\\
For a nonnegative solution of $L_uw=-\psi$, we have
\[
    \operatorname{div}_u(\sqrt f\,\nabla_u w)
    =-\frac{\psi}{\sqrt f}.
\]
Taking $A=\sqrt f\,I$ and $\phi=\psi/\sqrt f$ in
Theorem~\ref{main} therefore gives a Harnack inequality,
and hence local $L^\infty$ control, for the linearized
Monge--Amp\`ere equation under the same geometric hypotheses (see Corollary \ref{cor:LMA-harnack}).

 The main geometric ingredient is a gradient estimate for the extrinsic function $z$, which gives suitable cutoff functions and a comparison between $\sqrt z$ and the geodesic distance. We derive a Sobolev inequality from Green's function estimates and a Poincar\'e inequality by integrating along Euclidean
segments. An elementary comparison with Monge--Amp\`ere sections controls the volumes of the extrinsic balls. We then combine
Caccioppoli estimates and Moser iteration with a logarithmic estimate and the John--Nirenberg inequality to obtain the Harnack inequality.\\

The paper is organized as follows. In Section \ref{Sec:Priliminaries}, we introduce the Pseudo-Euclidean geometry of the gradient graph and go through the curvature identities we use later. In Section~\ref{Sec:Main}, we establish the
quasi-distance gradient estimate and distance comparison, discuss
the integral inequalities, and carry out Moser iteration on
extrinsic balls. We then transfer the estimates to quasi-distance
sublevels and apply them to the linearized Monge--Amp\`ere equation. Finally, we discuss some open questions and possible extension of our work.



\section{Preliminaries}\label{Sec:Priliminaries}
\subsection{Monge-Amp\`ere Equation and Section}
Two natural boundary-value problems for the Monge-Amp\`ere equation
are the Dirichlet problem
\begin{align}
    \begin{cases}
        \det D^2u = f & \text{in }\Omega,\\
        u = g & \text{on }\partial\Omega.
    \end{cases}
\end{align}
and the optimal transport problem between domains
\begin{align}
    \begin{cases}
        \det D^2u = f,\\
        \nabla u(\Omega) = \Omega^*.
    \end{cases}
\end{align}
Since we are dealing with interior estimates for this paper we will disregard boundary values.\\

 For $x_0\in\Omega$ and $h>0$, we define the section of $u$ at $x_0$ of height $h$ by
\begin{equation}\label{eq:section}
    S_h(u,x_0)
    :=\{x\in\Omega:
    u(x)<u(x_0)+Du(x_0)\cdot(x-x_0)+h\}.
\end{equation}
These sets are convex. After normalizing $x_0=0$, $u(0)=0$,
and $Du(0)=0$, this becomes $S_h(u,0)=\{x\in\Omega:u(x)<h\}$.
For $t>0$, we write
\[
    tS_h(u,x_0):=x_0+t\bigl(S_h(u,x_0)-x_0\bigr)
\]
for the dilation of the section about $x_0$.

\subsection{Submanifolds of Semi-Euclidean Space}
Let $\bR^{n,n}$ denote the space $\bR^{n +n}$ equipped with the semi-Riemannian metric $\sum_{i=1}^p dx_i \tensor dy_i$ . Given a function $u : \bR^{n} \to \bR$ we can consider the graph $M = \{(x,\nabla u(x)) \mid x \in \bR^{n}\} \subseteq \bR^{n,n}$. Under these coordinates, being spacelike is equivalent to $D^2u>0$. Suppose $u$ is a convex solution to 
\begin{align}
    \det D^2u = f
\end{align}
Using the local coordinates $F: x \in \Omega \subseteq \bR^n \to (x,\nabla u(x)) \subseteq \bR^{n,n}$
\begin{align*}
    H =  \frac1n\Delta_M F = \frac1{2n}\begin{pmatrix}-D^2u^{-1} D \log f\\ D\log f\end{pmatrix}
\end{align*}
Where the $D$ here is the Euclidean gradient and so under the intrinsic metric
\begin{align*}
     \langle H,H\rangle &= -\frac1{4n^2}D\log f\cdot D^2u^{-1} D \log f\\
                    &= - \frac1{4n^2}\frac{u^{ij}f_i f_j}{f^2}
\end{align*}
and we will define $\lVert H \rVert = \sqrt{\frac{1}{4n^2} \frac{u^{ij} f_i f_j}{f^2}}$.
\subsection{Curvature and Monge-Amp\`ere Equation}\label{CurveMA}
We will first do some computation in $\bR^{p,q}$ with a constant semi-Riemannian metric $\langle, \rangle$. If we pick an orthonormal frame $\{e_i, e_\alpha \}_{1
\le i\le p, 1 \le \alpha \le q}$ such that the first $p$ vectors have length 1 and the second $q$ vectors have length $-1$ and an associated dual frame $\{\omega_i, \omega_\alpha\}$ then we can express the second fundamental form of $M$ as 
\begin{gather*}
    \mathrm{II} = \sum_{i,j,\alpha} h^\alpha_{ij} \omega_i \tensor \omega_j \tensor e_{\alpha}
\end{gather*}
And we can express the Ricci tensor as
\begin{align}\label{ricci}
    R_{ij} = \sum_{\alpha=1}^q \left(-h_{ij}^\alpha \left(\sum_k h_{kk}^\alpha\right) + \sum_k h^\alpha_{ik} h^{\alpha}_{kj}\right).
\end{align}
See \cite{chengyaumaximal} or \cite{jost2001aspectsglobalgeometryentire} for a derivation.
\\
We can define the function $z : M \to \bR$ by $z(x) = \langle x,x\rangle$. Given an orthonormal frame field $\{e_1,\ldots e_p, e_{p+1}, \ldots, e_{p+q}\}$ we can compute the derivatives of $z$ as 
\begin{gather}
    z_i(X) = 2\langle e_i, X \rangle\\
    z_{ij} = 2\delta_{ij} + 2\sum_\alpha \langle e_{p + \alpha}, x \rangle h^{\alpha}_{ij}\\
    \Delta z = 2p + 2\sum_{\alpha, i} \langle e_{p + \alpha}, x \rangle h^{\alpha}_{ii}.
\end{gather}
Using the ideas of Jost in \cite{jost2001aspectsglobalgeometryentire}, we can specialize the above computation to an adapted orthonormal frame for $M$. For any $x \in M$ define $\bar x = x - \sum_i^p \langle x, e_i \rangle e_i$
\begin{align}
    e_{p+1} = \frac{\bar x}{\lVert \bar x\rVert } 
\end{align}
Then we iteratively define the $\{e_{p+2}, \ldots e_{p+q}\}$ to be orthogonal to all previous vectors, and then expand around a point to get a local orthonormal frame. In this specific orthonormal frame we get that
\begin{gather}\label{normal}
    z_{ij} = 2\delta_{ij} +2\langle e_{p+1}, x \rangle h^{1}_{ij}\\
    \Delta z = 2p + 2\sum_i \langle e_{p+1}, x\rangle h^1_{ii}.
\end{gather}
In the case that $M$ is a maximal submanifold its mean curvature is 0 so we have $\Delta z = 2p$.
\\
Specializing to the Monge-Amp\`ere equation if we pick $M = \mathrm{Graph} (\nabla u)$ where $u$ solves $\det D^2u = f$ such that $u(0) = 0$ $Du(0) = 0$ we can embed $u$ into $\bR^{n,n} = (\bR^{n + n}, dx \cdot dy)$. Intrinsically $M$ is just the Riemannian manifold $(\Omega, u_{ij})$, and $z$ becomes the quasi-distance for Monge-Amp\`ere equations. Furthermore the Ricci curvature is given as
\begin{align}
    R_{ij}= - n\sum_{\alpha=1}^n h_{ij}^{\alpha} H^\alpha +\sum_{\alpha = 1}^n \sum_{k=1}^n h_{ik}^\alpha h_{kj}^\alpha.
\end{align}
If $f \equiv \mathrm{const}$ then all the mean curvatures vanish and then $R_{ij} \ge 0$.

\section{Main Results}\label{Sec:Main}
\subsection{Gradient Estimate}
Let $u : \Omega \to \bR$ solve $\det D^2 u = f$ for a $C^2$ function $f$. Then we first prove a gradient estimates for the quasi-distance function $z$ on the manifold $M$. We prove this result with a maximal principle argument similar to \cite{chengyaumaximal} Section 3 Theorem 1 and \cite{jost2001aspectsglobalgeometryentire}. We state the gradient estimate for a spacelike submanifold.
\begin{theorem}\label{gradest}
    Let $M \subseteq \bR^{n,n}$ be spacelike such that $0 \in M$. Let $z(X) = \langle X, X\rangle$ be the restriction of the pseudo-norm to $M$. Then for any $c > 0$ and $\alpha \in (0,1)$ such that $\{z < c\}$ is contained in the interior
    \begin{align}
        \sup_{\{z \le \alpha c\}} \frac{|\nabla z|^2}{z} \le C(n, \lVert H\rVert_{\infty} , \lVert \nabla H \rVert_{\infty}, \alpha).
    \end{align}
\end{theorem}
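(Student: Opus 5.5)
We argue by the maximum principle, following Cheng--Yau \cite{chengyaumaximal} and Jost--Xin \cite{jost2001aspectsglobalgeometryentire}. On the compact set $\{z\le c\}\subset M$ consider
\[
    Q=\frac{|\nabla z|^2}{z}\,\eta,\qquad \eta=(c-z)^2 .
\]
First we record the structure of $\nabla z$ in the adapted frame of Section~\ref{CurveMA}, with $p=q=n$. Write $X=X^T+\bar X$ with $X^T=\sum_i\langle X,e_i\rangle e_i$. Then
\[
    |\nabla z|^2=4|X^T|^2,\qquad z=|X^T|^2-\|\bar X\|^2 ,
\]
where $\|\bar X\|^2=-\langle \bar X,\bar X\rangle\ge0$ because $\bar X$ is timelike. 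Hence $|\nabla z|^2=4z+4\|\bar X\|^2\ge 4z$, and bounding $Q$ amounts to controlling the normal part $\|\bar X\|^2$ by $z$. By \eqref{normal}, $z_{ij}=2\delta_{ij}+2\langle e_{n+1},X\rangle h^1_{ij}$, and $\langle e_{n+1},X\rangle=-\|\bar X\|$. The Hessian of $z$ is therefore governed by the single normal direction $e_{n+1}$, and $\Delta z=2n-2n\|\bar X\|\,H^1$.

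At an interior maximum point of $Q$ (we may assume $z>0$ there, since $Q\to0$ on $\{z=c\}$ and $Q$ is bounded near $0$), we have $\nabla\log Q=0$ and $\Delta\log Q\le0$. Using the first-order condition, we compute $\Delta|\nabla z|^2$ by Bochner's formula:
\[
    \tfrac12\Delta|\nabla z|^2=|\nabla^2z|^2+\langle\nabla z,\nabla\Delta z\rangle+\mathrm{Ric}(\nabla z,\nabla z).
\]
The Ricci term is estimated from \eqref{ricci}. It equals $\sum_\alpha|h^\alpha(\nabla z,\cdot)|^2-n\,h^\alpha(\nabla z,\nabla z)H^\alpha$, so it is bounded below by $-C\|H\|\,|h|\,|\nabla z|^2$, and this negative part can be absorbed into the positive square. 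The key positive term is $|\nabla^2 z|^2\ge 4\|\bar X\|^2|h^1|^2-C$. The term $\langle\nabla z,\nabla\Delta z\rangle$ produces derivatives of $\|\bar X\|H^1$. Here $\nabla H$ enters, and we use that $\nabla\|\bar X\|$ is bounded by $|h|\,|X^T|$ via the Weingarten equation. All error terms are of the form $C(\|H\|_\infty,\|\nabla H\|_\infty)\,(1+\|\bar X\|)|X^T|^2$ plus terms containing $|h|$, which Cauchy--Schwarz absorbs into $|\nabla^2z|^2$.

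Combining the estimate with the $\eta$ and $1/z$ terms through $\nabla\log Q=0$, we aim for an inequality of the form
\[
    0\ge \frac{\Delta Q}{Q}\ge \frac{c_0\,|\nabla z|^2}{z}\cdot\frac{|\nabla z|^2\eta}{(c-z)^2 z}\;-\;\frac{C}{c-z}\cdot\frac{|\nabla z|^2}{z}\;-\;C .
\]
This gives $Q\le C(n,\|H\|,\|\nabla H\|)\,c^2$ at the maximum. On $\{z\le\alpha c\}$ we have $\eta\ge(1-\alpha)^2c^2$, so $|\nabla z|^2/z\le C/(1-\alpha)^2$. The powers of $c$ cancel because $z$ is homogeneous of degree $2$ under scaling. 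The $H$ terms are not scale invariant, so for consistency the constant should really involve $c\|H\|^2$ and $c\|\nabla H\|$; either $c$ is treated as bounded, or these terms are tracked through the computation.

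The main obstacle is the good-term step. We must extract a \emph{positive} quadratic term in $Q$ from $\Delta\log Q$. In Cheng--Yau this comes from the structure $|\nabla z|^2=4z+4\|\bar X\|^2$ together with $\nabla^2z$. In the maximal case one uses $\Delta z=2n$ directly. Here one must show that $\Delta\log(|\nabla z|^2/z)$ yields a term like $\|\bar X\|^2|\nabla z|^2/z^2$ modulo $H$-errors. My first attempt is to work instead with the auxiliary function $\log(|\nabla z|^2-4z)-\log z=\log(4\|\bar X\|^2/z)$. Its Laplacian can be computed directly from the Weingarten and Codazzi equations. The hope is that the Kato-type inequality $|\nabla\|\bar X\||^2\le|h^1(X^T,\cdot)|^2$ closes the estimate.
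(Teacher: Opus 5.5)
Your proposal has a genuine gap. You leave open exactly the step you call the main obstacle, and the bookkeeping you sketch cannot close it.

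\textbf{Where the good term comes from.} At a maximum of $|\nabla z|^2e^{-\psi(z)}$, with $w=\psi'$, the first-order condition gives $\nabla^2z(\nabla z,\cdot)=\tfrac w2|\nabla z|^2\nabla z$. Hence the cross term $-2w\,\nabla^2z(\nabla z,\nabla z)$ equals $-w^2|\nabla z|^4$. The best lower bound for $2|\nabla^2z|^2$ compatible with this condition and with $\operatorname{tr}\nabla^2z=\Delta z$ is, to leading order, $\tfrac{n}{2(n-1)}w^2|\nabla z|^4$. The paper's use of Lemma~2 of \cite{yau1975} gives $\tfrac{2n-1}{4(n-1)}$. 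So from the Hessian alone the coefficient of $w^2|\nabla z|^4$ is nonpositive for $n\ge2$, and nothing controls the cutoff term $-w'|\nabla z|^4$ near $z=c$.

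The missing idea is that the positive part of $\mathrm{Ric}(\nabla z,\nabla z)$ is the second source of the good term. You spend it on absorbing $H$-errors instead. Insert the first-order condition into $z_{ij}=2\delta_{ij}+2\langle X,e_{n+1}\rangle h^1_{ij}$. This gives
$h^1(\nabla z,\cdot)=\frac{\frac w2|\nabla z|^2-2}{2\langle X,e_{n+1}\rangle}\nabla z$.
Since $4\langle X,e_{n+1}\rangle^2=|\nabla z|^2-4z\le|\nabla z|^2$ by \eqref{zidentity},
\[ \sum_k h^1(\nabla z,e_k)^2\ \ge\ \Bigl(\tfrac w2|\nabla z|^2-2\Bigr)^2 . \]
After the factor $2$ in $\Delta|\nabla z|^2$, this supplies the missing $+\tfrac12 w^2|\nabla z|^4$.

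In addition, the $h^\alpha_{ij}H^\alpha z_iz_j$ terms in $\langle\nabla z,\nabla\Delta z\rangle$ and in $\mathrm{Ric}(\nabla z,\nabla z)$ cancel \emph{exactly}. Only $2n\langle X,e_{n+1}\rangle\sum_i z_iH^1_i$ survives, so no absorption is needed. The net leading coefficient is $\tfrac{1}{4(n-1)}w^2-w'$. Your alternative function $\log(4\|\bar X\|^2/z)$ is stated only as a hope and does not supply this term.

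\textbf{The cutoff.} Your choice $\eta=(c-z)^2$ is too weak. Since $\eta$ is a function of $z$, its derivatives are not lower-order errors of the form $\frac{C}{c-z}\frac{|\nabla z|^2}{z}$, as your target inequality presumes. They enter at top order through $-w'|\nabla z|^4$. For $\eta=(c-z)^k$ one has $w=\frac1z+\frac{k}{c-z}$. Near $z=c$ the leading coefficient is then approximately $\bigl(\frac{k^2}{4(n-1)}-k\bigr)(c-z)^{-2}$, which is negative for $k=2$ and every $n\ge2$. This is why the paper uses $\exp(-m/(c-z))$ with $m\ge 16(n-1)c$. A power $k\ge 8(n-1)$ would also work.

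\textbf{Smaller points.} One has $|\nabla^2z|^2-4\|\bar X\|^2|h^1|^2=4n-8n\|\bar X\|H^1$. So the error in your bound on $|\nabla^2 z|^2$ is of size $\|H\|\,|\nabla z|$, not a constant.

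Your scaling remark, by contrast, is correct, and it applies to the paper's proof as well. When the paper divides by $z^2$ and bounds the resulting factors using $z\le c$, the quantities $\sqrt c\,\|H\|_\infty$ and $c\,\|\nabla H\|_\infty$ enter the constant. The statement does not list $c$ among its dependencies.
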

\begin{proof}
    The set $\{z \le c\}$ is compact as proven in \cite[Lemma~100.1]{jost2001aspectsglobalgeometryentire} so this supremum is well-defined. Consider the following function $f(X) = \frac{|\nabla z|^2}{z} \exp(\frac{-m}{c-z})$ where $m >0$ to be chosen later. Then since $z$ is non-negative this function cannot have a boundary maximum (as $z = c$ and the $\exp$ will go to 0), therefore the maximum occurs at an interior point say $q$. Then we have
    \begin{align*}
        \nabla f(q) = 0\\
        \Delta f(q) \le 0.
    \end{align*}
    Expanding and computing gives us the following two equations
    \begin{gather}\label{grad0}
        2\sum_jz_j z_{ij} - w|\nabla z|^2 z_i = 0\\
        2\sum_{i,j}z_{ij}^2 + 2 \sum_{i,j} z_j z_{iji} - w' | \nabla z|^4 - w\left(2\sum_{i,j} z_{ij}z_i z_j + |\nabla z|^2 \Delta z \right) \le 0\label{grad1}
    \end{gather}
    where 
    \begin{align*}
        w = \frac{1}{z} + \frac{m}{(c- z)^2}.
    \end{align*}
    
    By our choice of normal vectors in Equation~\eqref{normal} we have
    \begin{align}
        z_{ij} = 2 \delta_{ij} + 2 \langle X, e_{n+1} \rangle h^{n+1}_{ij}.
    \end{align}
    and so 
    \begin{gather}
        h^{n+1}_{ij} = \frac{z_{ij} - 2\delta_{ij}}{2 \langle X, e_{n+1}\rangle}\\
        \Delta z = 2n + 2n \langle X, e_{n+1}\rangle H^{n+1}.
    \end{gather}
    Furthermore by definition
    \begin{align*}
        \langle e_{n+1}, X\rangle^2 &= \sum_{i=1}^n \langle e_i ,X \rangle^2 - z\\
        &=\frac{|\nabla z|^2}{4} - z.
    \end{align*}
    Rearranging therefore gives
    \begin{align}\label{zidentity}
        z = \frac14 |\nabla z|^2 - \langle X, e_{n+1} \rangle^2.
    \end{align}
    Using Lemma 2 in \cite{yau1975} we get
    \begin{align*}
        \sum_{i,j} z_{ij}^2 \ge \frac{2n-1}{2n-2} \sum_{i} \left(\sum_{j} z_{ij}z_j\right)^2 |\nabla z|^{-2} - \frac1{n-1}(\Delta z)^2.
    \end{align*}
    Substituting Equation~\eqref{grad0} 
    \begin{align*}
        \sum_{i,j} z_{ij}^2 \ge \frac{2n-1}{8n-8}g^2 |\nabla z|^4 - \frac{1}{n-1}\left(2n + n H^{n+1} (|\nabla z|^2 - 4z)^{1/2}\right)^2.
    \end{align*}
    Now for the $z_{i}z_{iji}$ term using Equation~\eqref{ricci}, Equation~\eqref{zidentity} and our adapted coordinates
    \begin{align*}
        \sum_{i,j} z_j z_{iji} &= \sum_{i} z_i (\Delta z)_i + \sum_{\alpha, i,j} R^{}_{ij}z_i z_j\\
        &= \sum_i z_i \left(2n  + 2\sum_{k, \alpha} \langle e_{n+\alpha}, X \rangle h_{kk}^\alpha\right)_i + \sum_{i,j} R_{ij} z_i z_j\\
        &= n\sum_{i, j, \alpha} z_i z_j h_{ij}^\alpha H^\alpha + 2n\sum_{i,j, \alpha}\langle e_{n+\alpha}, X\rangle z_iH^\alpha_i -n\sum_{i,j, \alpha} z_i z_j h_{ij}^\alpha H^\alpha  + \sum_{i,j,k, \alpha} h_{ik}^\alpha h_{kj}^\alpha z_i z_j\\
        &= 2n\sum_{i,j}\langle e_{n+1}, X\rangle z_iH^1_i + \sum_{i,j,k, \alpha} h_{ik}^\alpha h_{kj}^\alpha z_i z_j\\
        &\ge  2n\langle e_{n+1}, x \rangle\sum_{i} z_i H_i^1+  \sum_{i,j,k} h^{1}_{ik} h^{1}_{kj} z_i z_j\\
        &\ge n \sqrt{ |\nabla z|^2 - 4z} \sum_i z_iH_i^{n+1} + 4 - 2g|\nabla z|^2 + \frac14 g^2 |\nabla z|^4.
    \end{align*}
    Plugging everything into our Equation~\eqref{grad1} and simplifying gives
    \begin{equation*}
    \begin{split}
        \left(\frac1{4(n-1)}w^2 - w'\right) |\nabla z|^4 - w(4 + 2n + n |H^{n+1}|\sqrt{|\nabla z|^2 - 4z}) |\nabla z|^2 + \\8 - \frac2{n-1}(2n + n |H^{n+1}|\sqrt{|\nabla z|^2 - 4z})^2 +n \sqrt{ |\nabla z|^2 - 4z} \sum_i z_iH_i^{n+1} \le 0.
    \end{split}
    \end{equation*}
    We need to insure that the coefficient of the $|\nabla z|^4$ term is positive therefore we have
    \begin{align*}
        \left(\frac1{4(n-1)}w^2 - w'\right) = \frac1{4(n-1)}\left(  \frac{1}{z^2} + \frac{2m}{z (c-z)^2} + \frac{m^2}{(c-z)^4}\right) + \frac{1}{z^2} - \frac{2m}{(c-z)^3}.
    \end{align*}
    If we ensure that $\frac{m^2}{4(n-1)(c-z)} \ge 2m$ then this term is always positive which occurs when $m \ge 8(n-1)c$. Then we can write 
    \begin{align*}
        |\nabla z|^4 \le\frac{ w(4 + 2n + n|H||\nabla z|)|\nabla z|^2  + \frac2{n-1} (2n + n |H| |\nabla z|)^2 - n |\nabla z|^2 |\nabla H|}{\frac1{4(n-1)}w^2 - w'}.
    \end{align*}
    In the numerator, the only part that may shoot off towards infinity is $g$ where it is $O(1/z)$ as $z \to 0$ and $O(1/(z-c)^2)$ as $z \to c$. However the denominator is $O(1/z^2)$ and $O(1/(z-c)^4)$. Therefore we can divide by $z^2$ and get
    \begin{align*}
        \frac{|\nabla z|^4}{z^2} &\le A\frac w{z^2 \frac1{4(n-1)}w^2 - w'} + B \frac{|\nabla z|}{z^{1/2}} \frac w{z^{3/2}\frac1{4(n-1)}w^2 - w'} \\&+ C \frac{|\nabla z|^2}{z} \frac w{z \frac1{4(n-1)}w^2 - w'} + D \frac{|\nabla z|^3}{z^{3/2}}\frac{w}{z^{1/2}\frac1{4(n-1)}w^2 - w'}.
    \end{align*}
    Taking $m > 16(n-1)c$ gives
    \begin{align*}
        \frac{w^2}{4(n-1)}- w'\ge \frac{1}{z^2} + \frac{w^2}{8(n-1)}.
    \end{align*}
    Therefore the denominator of each term is bounded.
    \begin{align*}
        \frac{|\nabla z|^4}{z^2} \le A' + B' \frac{|\nabla z|}{z^{1/2}} + C' \frac{|\nabla z|^2}{z} + D \frac{|\nabla z|^3}{z^{3/2}}.
    \end{align*}
    This immediately implies $\frac{|\nabla z|^2}{z} (q) \le C$ for some larger constant $C$, and so $f(q) \le C$ and then for $z \le \alpha c$ with $\alpha <1$ we get 
    \begin{align*}
        \frac{|\nabla z|^2}{z} \le \frac{f(q)}{\inf \{z \le \alpha c\}\exp(\frac{-m}{c-z})} \le C(\alpha).
    \end{align*}
\end{proof}
Now using the discussion in Subsection~\ref{CurveMA} given a solution $\det D^2u = 1$ then the graph of the gradient of $u$ will have zero mean curvature so in particular $H$ and $\nabla H$ therefore we get an estimate on $\frac{|\nabla z|^2}{z} \le C$ on compact subsets away from the boundary.
\subsection{Distance Estimates}
Using this gradient estimate for the quasi-distance $z$ we can prove an equivalence of intrinsic/extrinsic distances in the interior of $M$. Let $d (\cdot, \cdot) : M \times M \to \bR$ be the geodesic distance function on $M$ and assume $0 \in M$. We follow the proof idea of \cite{volumeentropy} where the authors proved a similar distance estimate for maximal submanifolds in spaces of constant negative curvature.
\begin{theorem}\label{distest}
    Assume $x$ is close enough to $0$ such that the minimal distance is achieved via a length minimizing geodesic and assume $x \in \{z (x) < c\}$. There exists a constant $C = C(n,c)$ such that
    \begin{align}
        d(0,x) \le \sqrt{z(x)} \le C(n) d(0, x).
    \end{align}
\end{theorem}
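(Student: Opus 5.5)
The plan is to prove the two inequalities separately. Throughout, the key inputs are the identity \eqref{zidentity}, $z=\frac14|\nabla z|^2-\langle X,e_{n+1}\rangle^2$, and the gradient estimate of Theorem~\ref{gradest}.

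\textbf{Lower bound $d(0,x)\le\sqrt{z(x)}$.}
\begin{itemize}
\item Take the Euclidean segment $\gamma(t)=tx$ in $\Omega$. This uses the convexity of $\Omega$; in the graph picture it is the curve $t\mapsto (tx,Du(tx))$ on $M$.
\item Its intrinsic length is $\int_0^1\sqrt{x^TD^2u(tx)x}\,dt$.
\item Normalize $Du(0)=0$ and write $\phi(t)=x\cdot Du(tx)$. Then $\phi'(t)=x^TD^2u(tx)x\ge0$, $\phi(0)=0$, and $\phi(1)=z(x)$.
\item By Cauchy--Schwarz,
\[
d(0,x)\le\int_0^1\sqrt{\phi'}\,dt\le\Bigl(\int_0^1\phi'\,dt\Bigr)^{1/2}=\sqrt{z(x)}.
\]
\end{itemize}
Since $\int_0^1 dt=1$, this direction has constant $1$ and needs no curvature input.

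\textbf{Upper bound $\sqrt{z(x)}\le C\,d(0,x)$.}
\begin{itemize}
\item Let $\sigma:[0,\ell]\to M$ be a unit-speed minimizing geodesic from $0$ to $x$, with $\ell=d(0,x)$. It exists by the hypothesis of the statement.
\item First check that $\sigma$ stays in a region where Theorem~\ref{gradest} applies, namely $\{z\le\alpha c'\}$ with $\{z<c'\}$ interior. Do this by a continuity argument: let $s^*$ be the first time $z(\sigma(s))$ reaches the threshold, and show the estimate below forbids reaching it before $s=\ell$. Alternatively, assume $x$ is close enough to $0$ that this holds, as the statement permits.
\item Along $\sigma$, set $\psi(s)=\sqrt{z(\sigma(s))}$. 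Then
\[
|\psi'(s)|=\frac{|\langle\nabla z,\sigma'\rangle|}{2\sqrt z}\le\frac12\frac{|\nabla z|}{\sqrt z}\le\frac{\sqrt{C_0}}{2},
\]
where $C_0$ is the constant of Theorem~\ref{gradest}.
\item Since $\psi(0)=0$, integrating gives $\sqrt{z(x)}\le\frac{\sqrt{C_0}}2\,d(0,x)$.
\item $C_0$ depends on $n$, on $\alpha$ (hence on how $c$ relates to the interior region), and on $H$. In the setting of interest, $f$ is constant and $H\equiv0$, so $C=C(n,c)$ as stated.
\end{itemize}
The square root of $z$ is used because $\psi$ is Lipschitz even though $z$ alone vanishes to second order at $0$. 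There is no singularity at $s=0$, because the bound on $|\nabla z|^2/z$ holds uniformly up to $z=0$.

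\textbf{Main obstacle.} The delicate point is the containment step: ensuring the whole minimizing geodesic, not just its endpoint, lies in a sublevel set $\{z\le\alpha c'\}$ with $\{z<c'\}$ compactly interior. Without this, Theorem~\ref{gradest} cannot be applied along $\sigma$.
\begin{itemize}
\item The continuity argument first gives $\sqrt{z(\sigma(s))}\le\frac{\sqrt{C_0}}2 s$ on $[0,s^*]$.
\item If $d(0,x)<2\sqrt{\alpha c'}/\sqrt{C_0}$, then $s^*=\ell$.
\item This is consistent with the smallness assumption on $x$. Combined with the lower bound, it also shows the geodesic never leaves the extrinsic ball of comparable radius.
\end{itemize}
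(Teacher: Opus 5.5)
Your proposal is correct and follows the paper's proof. The lower bound comes from the intrinsic length of the Euclidean segment $t\mapsto tx$ combined with Cauchy--Schwarz, and the upper bound from integrating $|\nabla\sqrt z|\le \sqrt{C_0}/2$ (Theorem~\ref{gradest}) along the minimizing geodesic. Your continuity argument, which keeps the geodesic inside a sublevel set where Theorem~\ref{gradest} applies, is a step the paper passes over. Note that a priori you only know $d(0,x)<\sqrt c$, so the argument needs a larger sublevel set $\{z<c'\}$ with $c'\ge C_0c/(4\alpha)$ to be interior, or else $x$ taken correspondingly closer to $0$.
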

\begin{proof}
    Since the ambient manifold $\bR^{n,n}$ has a constant metric, geodesics are straight lines. Furthermore all points are connected by geodesics (straight lines). Let $\gamma$ be the intrinsic geodesic connecting 0 to the point $x$, then
    \begin{align}
        \sqrt{z(x)} - 0 &= \int_0^1 \langle \nabla \sqrt{z}\mid_{\gamma (t)}, \dot \gamma(t) \rangle dt\\
        &\le \int_{[0,1]} \lVert \nabla \sqrt{z}\rVert_{L^\infty} \lVert \dot \gamma \rVert dt\\
        &\le Cc \ell(\gamma)\\
        &\le C' d(0,x).
    \end{align}
    Now for the other direction we know that $d(0,x)$ is given as the length minimizing geodesic therefore
    \begin{align*}
        d(0,x) &\leq \int_0^1\sqrt{x^T D^2u(tx)x}\,dt\\
        &\leq \left(\int_0^1x^T D^2u(tx)x\,dt\right)^{1/2}\\
        &=\sqrt{x\cdot(Du(x)-Du(0))}
        =\sqrt{z(x)}.
    \end{align*}
\end{proof}

\begin{remark}
    By a change of coordinates, define $z_p : M \to \mathbb{R}$ to be $z_p(X) = \langle X-p, X-p\rangle$ the pseudo-norm distance to a point $p$, then we have the same gradient estimate and distance estimate with constants depending on the same things.
\end{remark}
\subsection{$L^\infty$ Estimates and Sobolev Inequality}
We will now prove an $L^\infty$ for the linearized Monge-Amp\`ere equation. The primary method is to use Moser-Harnack iteration, to do this we require a Sobolev inequality and a Caccioppoli-type inequality.\\
In this section we assume $M = \mathrm{Graph}(Du) \subseteq \bR^{n,n}$ with $\det D^2u = f$. We make the following assumptions on our manifold $M$
\begin{enumerate}
    \item The Ricci curvature $R_{ij} \ge (n-1)k$ for $k < 0$
    \item The mean curvature $H$ satisfies $H$ and $\nabla H$ are bounded.
\end{enumerate}
In terms of the PDE $\det D^2u = f$ the first constraint simply corresponds to $H$ being bounded as well. The requirements for $H$ and $\nabla H$ to be bounded correspond to $\nabla f$ and $\nabla^2f$ being bounded where $\nabla$ is the Levi-Civita connection of the manifold $M$. Although it is rather unclear what $f$ satisfy these assumptions, the simple example of $\det D^2u = 1$ satisfies them. Additionally the curvature bounds on $M$ on needed only for the gradient and distance estimates. The Monge-Amp\`ere Sobolev and Poincar\`e inequality can be derived entirely from the structure of the equation and standard methods without referencing Caffarelli-Gutierrez theory.
\\
Given our curvature constraints we could also derive Sobolev inequalities from Ricci curvature bounds using Riemannian geometry. However instead we prove it using an $L^p$ bound on the Green's function which is well-known and standard, we provide a proof for completeness.\\\\
The following proof will be stated for a general divergence form uniformly elliptic PDE on some domain $\Omega$ in $\bR^n$. Our proof of the Sobolev inequality is inspired by the
Green's function approach of Guo, Phong, Song and Sturm~\cite[Section~4]{GPSS23}.
\begin{theorem}\label{sobolev}
Consider an uniformly elliptic operator $L[u] = -\mathrm{div} \left(A(x) \nabla u \right)$ where $A(x)$ is symmetric positive semi-definite on a domain $\Omega$. Assume $G(x,y)$ is its Green's function (see \cite{gruterwidman1982}) and satisfies
\begin{align*}
    \sup_{x} \lVert G(x,\cdot) \rVert_{L^p(\Omega, dx)} \le M.
\end{align*}
Then we have for any function $\phi \in C^2(M)$ such that $\phi \equiv 0$ on $\partial \Omega$
\begin{align*}
    \lVert \phi \rVert_{L^{2 p}}^2 \le M \int A(x) \nabla \phi \cdot \nabla \phi dx.
\end{align*}
\end{theorem}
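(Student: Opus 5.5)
We plan to use the Green's function representation of $\phi$ followed by a duality argument. Since $\phi$ vanishes on $\partial\Omega$ and $G(x,\cdot)$ is the Green's function of $L$ with zero boundary data, we have for every $x\in\Omega$
\[
    \phi(x)=\int_\Omega G(x,y)\,L[\phi](y)\,dy
    =\int_\Omega A(y)\nabla_y G(x,y)\cdot\nabla\phi(y)\,dy .
\]
Integrating against $\phi(x)$ and using the symmetry $G(x,y)=G(y,x)$ (valid because $A$ is self-adjoint) gives the energy identity
\[
    \int_\Omega \phi^2\,dx=\int_\Omega\int_\Omega G(x,y)\,L[\phi](x)\,L[\phi](y)\,dx\,dy .
\]
The point is that $\int\phi^2$ can also be written as $\langle \mathcal{G}L\phi, L\phi\rangle$, where $\mathcal{G}$ is the integral operator with kernel $G$. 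The bound we want is a statement about the mapping properties of $\mathcal{G}$.

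The cleanest route is to test against an arbitrary $\psi\ge 0$. Write $q=(2p)'$, so that $\|\phi\|_{L^{2p}}=\sup\{\int\phi\psi:\|\psi\|_{L^{q}}\le1\}$. For such $\psi$, let $v=\mathcal{G}\psi$ solve $L v=\psi$ with $v=0$ on $\partial\Omega$. Integrating by parts and applying Cauchy--Schwarz in the $A$-inner product gives
\[
    \int\phi\psi=\int A\nabla\phi\cdot\nabla v
    \le\Bigl(\int A\nabla\phi\cdot\nabla\phi\Bigr)^{1/2}\Bigl(\int A\nabla v\cdot\nabla v\Bigr)^{1/2},
    \qquad
    \int A\nabla v\cdot\nabla v=\int\!\!\int G(x,y)\psi(x)\psi(y)\,dx\,dy .
\]
It therefore suffices to prove the bilinear kernel estimate $\int\!\!\int G\,\psi\otimes\psi\le M\|\psi\|_{L^{q}}^2$. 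Taking the supremum over $\psi$ then yields $\|\phi\|_{L^{2p}}^2\le M\int A\nabla\phi\cdot\nabla\phi$. For signed $\phi$ we use $|\phi|$, or split $\psi$ into positive and negative parts, using $G\ge0$.

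The main obstacle is the bilinear estimate, and here the hypothesis $\sup_x\|G(x,\cdot)\|_{L^p}\le M$ is used. By symmetry, the same bound holds for $\sup_y\|G(\cdot,y)\|_{L^p}$. We then apply Schur's test with the factorization
\[
    G\,\psi(x)\psi(y)=\bigl(G^{1/2}\psi(x)^{a}\psi(y)^{b}\bigr)\bigl(G^{1/2}\psi(x)^{b}\psi(y)^{a}\bigr),
\]
or equivalently Young's inequality for kernels with uniformly bounded $L^p$ norms in each variable. This bounds $\mathcal{G}:L^{r}\to L^{s}$ with $1+\frac1s=\frac1p+\frac1r$. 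Choosing $r=q$ and $s=q'=2p$ gives $\frac1r+\frac1{s'}$ consistent precisely when $\frac1q=\frac12\bigl(1+\frac1p\bigr)$. Hence $\int\psi\,\mathcal{G}\psi\le\|\psi\|_q\|\mathcal{G}\psi\|_{q'}\le M\|\psi\|_q^2$.

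I would check this exponent bookkeeping carefully. If the exponents do not match, the resulting inequality will have the form $\|\phi\|_{L^{s}}^2\le M\int A\nabla\phi\cdot\nabla\phi$ with the $s$ dictated by Young, and the statement's $2p$ would need to be read accordingly, as in \cite{GPSS23}. The remaining technical points are routine:
\begin{itemize}[label={}]
\item justifying the representation formula for $C^2$ functions vanishing on $\partial\Omega$ via \cite{gruterwidman1982};
\item handling positive semi-definite (not strictly positive) $A$ by approximating $A+\epsilon I$ and letting $\epsilon\to0$.
\end{itemize}
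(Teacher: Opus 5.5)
Your proposal is correct and is essentially the paper's argument: duality against $\psi\in L^{(2p)'}$, Cauchy--Schwarz in the energy inner product $(\phi,v)\mapsto\int A\nabla\phi\cdot\nabla v$ with $v=\mathcal{G}\psi$, and the bound $\|\mathcal{G}\psi\|_{L^{2p}}\le M\|\psi\|_{L^{(2p)'}}$. The paper obtains that bound by Riesz--Thorin between the H\"older bound $L^{p'}\to L^\infty$ and the Minkowski bound $L^1\to L^p$ (the latter using symmetry of $G$), which is exactly the Young/Schur kernel inequality you invoke; two slips should be corrected, neither of which enters the actual argument: $\iint G(x,y)\,L\phi(x)\,L\phi(y)\,dx\,dy$ equals $\int A\nabla\phi\cdot\nabla\phi$, not $\int\phi^2$, and with $r=q$, $s=q'$ the Young condition $1+\frac1s=\frac1p+\frac1r$ reduces to $\frac1q=1-\frac1{2p}$ (not $\frac12(1+\frac1p)$, which holds only for $p=2$), i.e.\ precisely $q=(2p)'$, so the exponents match and no reinterpretation of $2p$ is needed.
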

\begin{proof}
    Define the operator $T(f) = \int G(x,y) f(y) dy$ then
    \begin{align}
        \lVert T(f) \rVert_{L^\infty} \le \lVert G(x,\cdot) \rVert_{L^p} \lVert f \rVert_{L^q} = M \lVert f\rVert_{L^q}.
    \end{align}
    For $q = \frac{p}{p-1}$. Additionally
    \begin{align*}
        \lVert T(f) \rVert_{L^p} &= \left(\int \left( \int G(x,y) f(y) dy\right)^pdx\right)^{1/p}\\
        &\le\int\left(\int G(x,y)^p f(y)^p dx \right)^{1/p}dy\\
        &= \int |f(y)|\left(\int G(x,y)^p dx \right)^{1/p}dy\\
        &\leq M \lVert f\rVert_{L^1}.
    \end{align*}
    Therefore by Riesz-Thorin interpolation
    \begin{align*}
        \lVert T(f)\rVert_{L^{2p}} \le M \lVert f \rVert_{L^{\frac{2p}{2p-1}}}.
    \end{align*}
    Now for a smooth compactly supported function $\phi$ we have
    \begin{align*}
        \left|\int \phi fdx\right|^2 &= \left|\int \phi(x) \int G(x,y) L[f](y) dy dx\right|^2\\
        &= \left|\int  Tf L[\phi] dy dx\right|^2\\
        &\le \int Tf L[Tf]dx \int \phi L[\phi] dx\\
        &= \int f Tf dx \int \phi L[\phi] dx\\
        &\le \lVert f \rVert_{L^{\frac{2p}{2p-1}}} \lVert T f \rVert_{L^{2p}} \int \phi L[\phi] dx\\
        &\le M\lVert f \rVert_{L^{\frac{2p}{2p-1}}}^2 \int \phi L[\phi] dx.
    \end{align*}\\
    Now take $f = \frac{\phi^{2p-1}}{ \lVert \phi \rVert_{L^{2p}(dx)}^{2p-1}}$ so that $\lVert f\rVert_{L^{\frac{2p}{2p-1}}(dx) } = 1$. Then plugging this in gives
    \begin{align*}
        \left(\int \phi^{2p}dx\right)^2\frac1{\lVert \phi \rVert_{L^{2p}(dx)}^{4p-2}} = \lVert \phi\rVert_{L^{2p}(dx)}^2 
    \end{align*}
    Therefore
    \begin{align*}
        \lVert \phi\rVert_{L^{2p}(dx)}^2  \le M \int A(x) \nabla \phi \nabla \phi dx
    \end{align*}
\end{proof}
Consider a convex domain $\Omega$, and let $L$ be the linearized Monge-Amp\`ere equation expressed as $L[\phi] =  U^{ij} \phi_{ij}$, uniformly bounded away from 0 and $\infty$. Consider the following PDE.
\begin{align*}
    \begin{cases}
        L [\phi] = g &\text{in } \Omega\\
        \phi \equiv 0 &\text{on } \partial \Omega
    \end{cases}
\end{align*}
By the Alexandroff-Bakelman-Pucci estimate (see \cite[Lemma~9.1]{GT})
 \begin{align*}
    \lVert \phi \rVert_{L^\infty} \le C(n) \mathrm{diam}(\Omega) ||g||_{L^n(\Omega).}
\end{align*}

Therefore using Green's representation theorem
\begin{align*}
    \sup_x \int G(x,y) g(y)dy \le C\mathrm{diam}(\Omega)||g||_{n}
\end{align*}
Therefore we have
\begin{align*}
    \sup_x \lVert G(x, \cdot) \rVert_{L^{\frac{n}{n-1}}} \le C \mathrm{diam}(\Omega).
\end{align*}
So we have a Sobolev inequality for $\phi \in C^2_0(M)$ (functions vanishing on the boundary) of the form
\begin{align*}
    \lVert \phi \rVert_{L^{\frac{2n}{n-1}}}^2 \le C(n)\mathrm{diam}(\Omega) \int_\Omega U^{ij} \phi_i \phi_j dx
\end{align*}

Since $f$ has uniform bounds and the Riemannian volume form is $\sqrt{f}dx$, by inflating the constant we can move back to our manifold $(M, u_{ij})$, this gives us the desired Sobolev inequality
\[
 \|\phi\|_{L^{\frac{2n}{n-1}}(\Omega,dV)}^2
    \le C(n,\lambda,\Lambda)\operatorname{diam}(\Omega)
    \int_\Omega |\nabla\phi|_g^2\,dV.
\]

For the $L^\infty$ estimate we will also need a Poincar\'e inequality the manifold $M$. It is also of independent interest that a Poincar\`e inequality holds depending on the Monge-Amp\`ere quasi-distance. We write $\fint_{U} d\mu = \frac1{\mu(U)} \int \phi d\mu$ where the volume is computed using whatever measure we are integrating against, and we write $\phi_{U, \mu} = \frac1{\mu(U)} \int \phi d\mu$. If it is clear what measure we are integrating against we will just write $\phi_{U}$.
\begin{lemma} \label{poincare}
For any convex set $U$ and $\phi \in C^{\infty}(U) \cap L^2(U)$ there exists a constant $C = C(n, \lambda, \Lambda)$ such that
\begin{align*}
    \int_{U} |\phi - \phi_U|^2 dV \le C R^2 \int_U |\nabla \phi|^2 dV.
\end{align*}
where $R^2 = \sup_{x,y \in U} \langle x-y, Du(x)-Du(y) \rangle. $
\end{lemma}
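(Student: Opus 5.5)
We prove the Poincaré inequality by integrating along Euclidean segments, in the spirit of the classical proof on convex Euclidean domains. The Hessian metric enters only through the identity $\int_0^1 (x-y)^T D^2u(y+t(x-y))(x-y)\,dt = \langle x-y, Du(x)-Du(y)\rangle \le R^2$, which is the same computation used in the second half of the proof of Theorem~\ref{distest}. Since $dV=\sqrt f\,dx$ with $\lambda\le f\le\Lambda$, it suffices to prove the inequality for Lebesgue measure $dx$ and then absorb the density into the constant. We also replace $\phi_U$ by the Lebesgue average, which only costs a factor, since $\int|\phi-\phi_U|^2dV\le \int|\phi-c|^2dV$ for every constant $c$.

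\textbf{Step 1 (pointwise estimate).} For $x,y\in U$, convexity keeps the segment $\gamma(t)=y+t(x-y)$ inside $U$. Writing $\phi(x)-\phi(y)=\int_0^1 D\phi(\gamma(t))\cdot(x-y)\,dt$, Cauchy--Schwarz in the metric $g=D^2u$ gives $|D\phi\cdot v|\le |\nabla\phi|_g\,(v^TD^2u\,v)^{1/2}$. Applying Cauchy--Schwarz in $t$ then yields
\[
|\phi(x)-\phi(y)|^2\le \Bigl(\int_0^1 |\nabla\phi|_g^2(\gamma(t))\,dt\Bigr)\Bigl(\int_0^1 (x-y)^TD^2u(\gamma(t))(x-y)\,dt\Bigr)\le R^2\int_0^1|\nabla\phi|_g^2(\gamma(t))\,dt .
\]
The key feature is that $R^2$ has absorbed all the anisotropy of $D^2u$, so no Euclidean diameter appears.

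\textbf{Step 2 (averaging).} By Jensen,
\[
\int_U|\phi-\phi_U|^2dx\le \frac{1}{|U|}\int_U\int_U|\phi(x)-\phi(y)|^2\,dx\,dy\le \frac{R^2}{|U|}\int_0^1\int_U\int_U |\nabla\phi|_g^2(y+t(x-y))\,dx\,dy\,dt .
\]
By the symmetry $x\leftrightarrow y$, $t\leftrightarrow 1-t$, we may restrict to $t\ge 1/2$ at the cost of a factor $2$. For fixed $y$ and $t$, substitute $\xi=y+t(x-y)$. Its Jacobian is $t^{n}\ge 2^{-n}$, and $\xi$ ranges inside $U$ by convexity. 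The inner integral is therefore at most $2^n\int_U|\nabla\phi|_g^2d\xi$. Integrating in $y$ produces a factor $|U|$, which cancels the prefactor and gives
\[
\int_U|\phi-\phi_U|^2dx\le 2^{n+1}R^2\int_U|\nabla\phi|_g^2\,dx .
\]
Converting both sides to $dV$ introduces a factor depending only on $\lambda,\Lambda$.

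\textbf{Expected obstacle.} The only delicate point is the change of variables. Splitting at $t=1/2$ is what keeps the Jacobian bounded below; otherwise the $t\to0$ region degenerates. Note that the substitution must be made in $x$, with $y$ fixed, and the image must be checked to lie in $U$, which is again where convexity is used.

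The square-root convention also needs checking. Moving $|\nabla\phi|_g^2$ inside the segment integral requires $\phi\in C^1$ up to the segments. If $\nabla\phi$ is not in $L^2$, the right-hand side is infinite and the inequality is trivial, so no further integrability is needed.
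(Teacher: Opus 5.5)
Your proposal is correct and follows the same route as the paper. Both arguments use Jensen, integration along Euclidean segments with Cauchy--Schwarz against $D^2u$ so that $\int_0^1 (x-y)^TD^2u(\gamma(t))(x-y)\,dt=\langle x-y,Du(x)-Du(y)\rangle\le R^2$, the $x\leftrightarrow y$, $t\leftrightarrow 1-t$ symmetry to restrict to $t\ge 1/2$, and the substitution $\xi=y+t(x-y)$, with convexity keeping $\xi\in U$. The only differences are cosmetic: you use the crude bound $t^{-n}\le 2^n$ where the paper computes $\int_{1/2}^1 t^{-n}\,dt$ exactly, and you explicitly justify passing between the $dV$- and $dx$-averages via the minimizing property of the mean, which the paper leaves implicit.
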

\begin{proof}
    For the proof of Poincar\'e inequality we will work in coordinates instead of on the manifold. Therefore integration is with respect to Lebesgue measure, $x\cdot y$ represents Euclidean inner product and $|\cdot |$ is the standard norm. By Jensen's inequality
    \begin{align*}
        \int_{U} |\phi - \phi_U|^2 dx &\le \int_U \fint_U |\phi(x) - \phi(y)|^2 dx dy.
    \end{align*}
    Consider $x,y \in \Omega$. Let $\gamma(t) = tx + (1-t)y$ applying Cauchy-Schwarz twice gives
    \begin{align*}
        |\phi(x) - \phi(y)|^2 &\le \left(\int_0^1 |D\phi(\gamma(t)) \cdot \gamma'(t)|dx\right)^2\\
        &\le\left(\int_0^1 (D\phi^{T}(D^2u(\gamma(t)))^{-1}D\phi^{1/2} (\gamma'(t)D^2u(\gamma(t))\gamma'(t))^{1/2} dt\right)^2\\
        &\le \int_0^1  u^{ij} \phi_i \phi_j dt \int_0^1 \gamma'(t)D^2u(\gamma(t))\gamma'(t)dt\\
        &= \int_0^1  u^{ij} \phi_i \phi_j dt (Du(y) - Du(x)) \cdot(y-x).
    \end{align*}
    Now define $R^2 = \sup_{x,y} (Du(y) - Du(x))\cdot (y-x)$ then 
    \begin{align*}
        |\phi(x) - \phi(y)|^2 \le R^2\int_0^1 u^{ij}(\gamma(t)) \phi_i \phi_j dt.
    \end{align*}
    Therefore using Fubini and symmetry (and writing $\mathrm{vol}(U)$ for Euclidean Lebesgue measure of $U$)
    \begin{align*}
        \int_U \fint_U |\phi(x) - \phi(y)|^2 dx dy &\le \frac {R^2}{\mathrm{vol}(U)}\int_U \int_U\int_0^1 u^{ij}\phi_i \phi_j (tx + (1-t)y)dtdxdy \\
        &= \frac {R^2}{\mathrm{vol}(U)}\int_0^1\int_U \int_U u^{ij}\phi_i \phi_j (tx + (1-t)y)dxdy dt\\
        &= \frac{2R^2}{\mathrm{vol}(U)}\int_{1/2}^1\int_U \int_U u^{ij}\phi_i \phi_j (tx + (1-t)y)dxdy dt.
    \end{align*}
    Then for fixed $y$ take $z = tx + (1-t)y$ so $dz = t^{n} dx$
    \begin{align*}
        \frac{2R^2}{\mathrm{vol}(U)}\int_{1/2}^1\int_U \int_U& u^{ij}\phi_i \phi_j (tx + (1-t)y)dxdy dt\\
        &= \frac{2R^2}{\mathrm{vol}(U)}\int_{1/2}^1 t^{-n}\int_U \int_{tU + (1-t)y} u^{ij}\phi_i \phi_j (z)dzdy dt\\
        &\le 2{R^2} \frac{2^{n-1} - 1}{n-1}\int_Uu^{ij}\phi_i \phi_j (z)dz.
    \end{align*}
    Therefore we have shown 
    \begin{align*}
        \int_{U} |\phi - \phi_U|^2 dx \le C_n {R^2} \int_U u^{ij} \phi_i \phi_j dx.
    \end{align*}
    Where $C_n$ only depends on dimension. In the case of $\det D^2u = f$ where $0 < \lambda \le f \le \Lambda < \infty$, then the induced measure is $\sqrt{f}dx$ which differs only by constant multiples therefore we can absorb it into the constant $C(n, \lambda, \Lambda)$. Furthermore moving back to the manifold structure the term $R^2$ represents $\sup_{x,y \in \Omega} \langle x-y, x-y \rangle$ which is just the diameter using the restricted pseudo-norm on $M$. Therefore for any convex set $U$ we have
    \begin{align*}
        \int_U (\phi - \phi_{U, dV})^2 dV \le C(n, \lambda, \Lambda) R^2 \int_{U} |\nabla \phi|^2 dV.
    \end{align*}
\end{proof}
\begin{remark}
    One can do the same trick for any positive semi-definite matrix $a^{ij}$ which would give a proof of a Poincar\'e inequality for linearized elliptic equations with a constant depending on some generalization of $z$ to other equations.
\end{remark}

\subsection{$L^\infty$ Estimates}
Using the gradient estimate and Sobolev inequality we can now proceed with de Giorgi-Nash-Moser iteration to prove an $L^\infty$ estimate for solutions of divergence form elliptic equations on $M$. \\

Recall $B_{r_1} = \{z \le r_1^2\}$ and let $r < R$.  First we use the gradient estimate to construct cutoff functions adapted to $z$:
\begin{lemma}\label{cutoff}
    There exists a a smooth function $\eta: B_R \to [0,1]$  such that 
    \begin{align}
        \eta(x) = \begin{cases}
            0 & z(x) > R^2\\
            1 & z(x) < r^2\\
            \text{Between $0$ and $1$} & r^2 \le z(x) \le R^2
        \end{cases}
    \end{align}
    and such that there exists a constant $C = C(n, \lVert H\rVert_{L^\infty}, \lVert \nabla H \rVert_{L^\infty})$
    \begin{align}
        \lVert \nabla \eta\ \rVert_{L^\infty} \le  \frac{C}{R - r}.
    \end{align}
\end{lemma}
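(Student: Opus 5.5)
The plan is to build $\eta$ as a function of $\sqrt z$ and let Theorem~\ref{gradest} control its gradient. Fix a smooth nonincreasing $\psi:\bR\to[0,1]$ with $\psi(s)=1$ for $s\le r$, $\psi(s)=0$ for $s\ge R$, and $|\psi'|\le \frac{2}{R-r}$. Such a $\psi$ is obtained by mollifying the piecewise linear profile that equals $1$ on $(-\infty,r]$, $0$ on $[R,\infty)$, and is linear in between, taken on a slightly shrunken interval.

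Set $\eta(x)=\psi(\sqrt{z(x)})$. By construction $\eta=1$ on $\{z<r^2\}$, $\eta=0$ on $\{z>R^2\}$, and $0\le\eta\le1$ everywhere. Smoothness holds because $\psi\circ\sqrt{\cdot}$ is constant near $0$, so the non-smoothness of $\sqrt z$ at the origin, where $z$ vanishes, is harmless. Away from $z=0$, $\sqrt z$ is as smooth as $u$ allows, i.e.\ $z\in C^1$ when $u\in C^2$. If genuine $C^\infty$ regularity is required, I would mollify $\eta$ slightly, or simply note that Lipschitz regularity is all that Moser iteration uses.

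For the gradient, the chain rule gives
\[
|\nabla \eta|=|\psi'(\sqrt z)|\,\frac{|\nabla z|}{2\sqrt z}\le \frac{1}{R-r}\Bigl(\frac{|\nabla z|^2}{z}\Bigr)^{1/2}.
\]
This expression is supported on $\{r^2\le z\le R^2\}$. There I apply Theorem~\ref{gradest} with $c$ slightly larger than $R^2$, say $c=2R^2$ and $\alpha=\tfrac12$, giving $\sup_{\{z\le R^2\}}|\nabla z|^2/z\le C(n,\|H\|_\infty,\|\nabla H\|_\infty)$. The constant does not depend on $R$: the statement of Theorem~\ref{gradest} gives $C=C(n,\|H\|_\infty,\|\nabla H\|_\infty,\alpha)$ once $\alpha$ is fixed. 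This yields $\|\nabla\eta\|_{L^\infty}\le C/(R-r)$.

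The main obstacle is the hypothesis of Theorem~\ref{gradest} that $\{z<c\}$ lies in the interior of $M$ for $c$ a fixed multiple of $R^2$. I would handle this by assuming, as is implicit in working on $B_R$ with interior estimates, that $B_{\sqrt2 R}$ is compactly contained in $\Omega$. Alternatively one can take $c=R^2/\alpha$ with $\alpha$ close to $1$, and then $C$ depends on this fixed $\alpha$. A secondary point is that the gradient estimate is applied to $z$ centered at the base point $x_0$, which the Remark following Theorem~\ref{distest} allows.
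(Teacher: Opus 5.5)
Your proof is correct and follows the same route as the paper, which takes $\eta=\frac{R-\sqrt z}{R-r}$ on $\{r^2\le z\le R^2\}$ (extended by $1$ and $0$) and bounds $|\nabla\eta|=\frac{|\nabla z|}{2\sqrt z\,(R-r)}$ using Theorem~\ref{gradest}. Your mollified profile $\psi$ and your explicit choice of $c>R^2$ with $\{z<c\}$ in the interior go beyond the paper, whose $\eta$ is only Lipschitz and which applies the gradient estimate on $\{z\le R^2\}$ without stating that it needs room beyond $B_R$.
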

\begin{proof}
    Take $\eta(x) = \frac{R - \sqrt{z}}{R- r} $ on $B_R \setminus B_R$ and extend by 0 and 1 everywhere else. Then using Theorem~\ref{gradest} (and absorbing all the constants)
    \begin{align*}
        |\nabla \eta| = \frac{|\nabla z|}{2\sqrt{z}(R - r)} \le \frac{C}{R - r}.
    \end{align*}
    
\end{proof}
Now we can proceed with de Giorgi-Nash-Moser iteration on $M$ using the quasi-distance $z$ to get an a priori estiamte. We write $B_R = \{x \mid z(x) < R^2 \}$.

\begin{theorem}
    Let $w \in C^2(B_R)$ and $\phi \in L^\infty(B_{R})$ and $w$ is non-negative. Assume $\mathrm{div}_u(A(x) \nabla_u w) = -\phi$ where $\mathrm{div_u}$ and $\nabla_u$ are the divergence and gradient operator for $(M_, u_{ij})$ and $A(x)$ is a self-adjoint uniformly elliptic matrix such that $\lambda_0 \le A(x) \le \Lambda_0$,
    \begin{align}
        \sup_{B_{R/2}} w \le C (\inf_{B_{R/2}} w  + R^2 \lVert \phi\rVert_{ L^\infty(B_R) }).
    \end{align}
    Where $C(n,\lambda_0, \Lambda_0, \mathrm{Vol(M)}, \mathrm{diam}(M), \lVert \mathrm{H} \rVert_{L^\infty}, \lVert \mathrm{\nabla H} \rVert_{L^\infty})$
\end{theorem}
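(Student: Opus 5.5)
We follow the classical Moser scheme for nonnegative supersolutions and subsolutions, with Euclidean balls replaced by the extrinsic balls $B_\rho=\{z<\rho^2\}$. The ingredients are the cutoffs of Lemma~\ref{cutoff}, the Sobolev inequality of Theorem~\ref{sobolev} in its manifold form, and the Poincar\'e inequality of Lemma~\ref{poincare}. To absorb the right-hand side we set $\bar w=w+k$ with $k=R^2\|\phi\|_{L^\infty(B_R)}$. Then $|\phi|\le k R^{-2}\bar w$, so the source term behaves like a bounded zeroth order coefficient of size $R^{-2}$. This is harmless after scaling, since every cutoff gradient contributes $(R-r)^{-2}\gtrsim R^{-2}$.

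\textbf{Step 1: Caccioppoli inequality.} For $\beta\neq 0,-1$ we test the equation against $\eta^2\bar w^{\beta}$, with $\eta$ from Lemma~\ref{cutoff} between radii $r<\rho$. Using $\lambda_0\le A\le\Lambda_0$ this gives
\[
\int \eta^2 |\nabla \bar w^{(\beta+1)/2}|^2\,dV\le C(1+|\beta|)^2\Bigl(\frac{1}{(\rho-r)^2}+\frac1{R^2}\Bigr)\int_{B_\rho}\bar w^{\beta+1}\,dV .
\]
We then apply the Sobolev inequality to $\eta\bar w^{(\beta+1)/2}$, which vanishes on $\partial B_\rho$; the sets $B_\rho$ are compact by the proof of Theorem~\ref{gradest}. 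The Sobolev constant involves $\operatorname{diam}$, and in the manifold form a volume normalization, so we use the global quantities $\operatorname{Vol}(M)$ and $\operatorname{diam}(M)$ allowed in $C$. The outcome is a reverse H\"older gain with exponent $\chi=\frac{n}{n-1}>1$ between $B_r$ and $B_\rho$.

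\textbf{Step 2: iteration.} Iterating over radii $\rho_j=R/2+2^{-j}R/2$ with $\beta+1=p\chi^j$ gives
\[
\sup_{B_{R/2}}\bar w\le C\Bigl(\fint_{B_{3R/4}}\bar w^{p}\Bigr)^{1/p}\quad\text{for } p>0,
\]
after a standard covering or interpolation step to reach small $p$. Taking $\beta\in(-1,0)$ and negative powers similarly gives
\[
\inf_{B_{R/2}}\bar w\ge C^{-1}\Bigl(\fint_{B_{3R/4}}\bar w^{-p}\Bigr)^{-1/p}.
\]
Both averages need the volume ratios $|B_\rho|/|B_{\rho'}|$ to stay bounded, which must come from the distance comparison below.

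\textbf{Step 3: crossover via $\log\bar w$.} Testing with $\beta=-1$, i.e.\ with $\eta^2\bar w^{-1}$, gives $\int_{B_\rho}|\nabla\log\bar w|^2\,dV\le C\rho^{-2}|B_{2\rho}|$. Combined with Lemma~\ref{poincare}, this bounds the mean oscillation of $\log \bar w$. The quantity $R^2$ in Lemma~\ref{poincare} is the $\langle x-y,Du(x)-Du(y)\rangle$-diameter of $U$. We choose $U$ to be a section $S_h(u,x_0)$, which is convex, comparable to $B_\rho$, and has this diameter $\lesssim\rho^2$. Then $\log\bar w\in BMO$ with respect to the family of extrinsic balls. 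We need a John--Nirenberg inequality for this family, which requires a doubling measure and a quasi-metric structure. The quasi-triangle inequality for $\sqrt{z}$ follows from Theorem~\ref{distest}: $\sqrt z$ is comparable to the intrinsic distance $d$, which is a genuine metric. From John--Nirenberg we get $\fint \bar w^{p}\fint\bar w^{-p}\le C$ for some small $p>0$. Chaining this with Step 2 gives $\sup\bar w\le C\inf\bar w$, which is the claim after subtracting $k$.

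\textbf{Main obstacle.} The delicate part is the geometry of the extrinsic balls, namely the volume doubling $|B_{2\rho}|\le C|B_\rho|$ and the comparability with convex sections used for Poincar\'e. I would first try comparing $B_\rho$ with sections directly, using convexity:
\[
u(x)-u(x_0)-Du(x_0)\cdot(x-x_0)\le z_{x_0}(x).
\]
This gives $B_\rho\subset S_{\rho^2}$. In the other direction, the gradient estimate Theorem~\ref{gradest} and Theorem~\ref{distest} control $z$ on $tS_h$ for small $t$, giving $S_{c\rho^2}$ (or its dilation $tS_{c\rho^2}$) $\subset B_\rho$. Volumes of sections are comparable to $h^{n/2}$ by $\lambda\le f\le\Lambda$ and John's lemma, which yields doubling. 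If this fails, the fallback is to run Moser and John--Nirenberg directly on intrinsic balls $B^g_\rho$ and use Theorem~\ref{distest} to transfer the result to $B_\rho$, at the cost of a constant depending on $\|H\|_\infty$ and $\|\nabla H\|_\infty$.
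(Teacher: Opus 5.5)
Your outline follows the paper's proof step for step: the shift by $k=R^2\|\phi\|_{L^\infty(B_R)}$, Caccioppoli with $\eta^2\bar w^{\beta}$, Theorem~\ref{sobolev}, Moser iteration in both directions, the $\eta^2/\bar w$ logarithmic estimate, Poincar\'e, and John--Nirenberg. Volumes are controlled, as in the paper, by $\tfrac12 S_{\rho^2}(u,x_0)\subset B_\rho(x_0)\subset S_{\rho^2}(u,x_0)$ together with $|S_{\rho^2}|\sim\rho^n$. The left inclusion needs only one-variable convexity, not the gradient estimate. Set $g(t)=u(x_0+t(y-x_0))-u(x_0)-tDu(x_0)\cdot(y-x_0)$ for $y\in S_h(u,x_0)$. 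Then $z_{x_0}(x_0+\tfrac12(y-x_0))=\tfrac12 g'(\tfrac12)\le g(1)-g(\tfrac12)<h$.

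The gap is in the crossover step, where you take $U=S_{\rho^2}(u,x_0)$ in Lemma~\ref{poincare}. You need two things there.
\begin{enumerate}
\item The symmetrized diameter $\sup_{x,y\in U}(x-y)\cdot(Du(x)-Du(y))$ must be $\lesssim\rho^2$.
\item You need $U\subset B_{C\rho}(x_0)$, so that the logarithmic Caccioppoli bound, which is proved on extrinsic balls, controls $\int_U|\nabla\log\bar w|^2$.
\end{enumerate}
Both reduce to the inclusion $S_h(u,x_0)\subset B_{C\sqrt h}(x_0)$, i.e.\ $z_{x_0}\le Ch$ on all of $S_h$. This does not follow from Theorems~\ref{gradest} and~\ref{distest}. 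Those theorems compare $\sqrt{z_{x_0}}$ with the intrinsic distance $d$. The only available upper bound on $d$ over a section comes from segments, $d(x_0,x)^2\le z_{x_0}(x)$, which is circular. Along a ray $z_{x_0}=tg'(t)$, and bounding $g'(1)$ by a multiple of $g(1)<h$ requires knowing that $u$ does not grow too fast just outside $S_h$. That is section geometry, not a consequence of the gradient estimate.

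Shrinking to $U=\tfrac12S_{\rho^2}$ does not help. Its diameter is then controlled by Theorem~\ref{distest} and the triangle inequality for $d$, but it no longer contains a comparable extrinsic ball without the same missing inclusion. You could prove the inclusion by John-normalizing $S_{2h}$ and using Aleksandrov's maximum principle to bound $|D\tilde u|$ on the normalized $\tilde S_1$ (assuming $S_{2h}\Subset\Omega$). That brings back the section machinery the argument is meant to avoid.

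The paper instead takes $U$ to be the convex hull $\tilde B_\rho(y)$ of the extrinsic ball itself. It is convex and contains $B_\rho(y)$ automatically. For $q=\sum_it_ip_i$ with $p_i\in B_\rho(y)$, one has the identity
\[
\sum_i t_i\,(q-p_i)\cdot\bigl(Du(q)-Du(p_i)\bigr)=\tfrac12\sum_{i,j}t_it_j\,(p_i-p_j)\cdot\bigl(Du(p_i)-Du(p_j)\bigr).
\]
Combined with Theorem~\ref{distest} and the triangle inequality for $d$, this gives an index $i$ with $z_{p_i}(q)\le C\rho^2$, hence $d(y,q)\le C\rho$. Consequently $\tilde B_\rho(y)\subset B_{C\rho}(y)$, and its symmetrized diameter is at most $C\rho^2$. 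With this choice of $U$ the rest of your argument goes through as in the paper.
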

\begin{proof}
    Let $k = R^2 \lVert \phi\rVert_{L^\infty(B_R)} +\ep$ for some $\ep$ chosen later and write $v = w + k$. All constants used will be independent of $\ep$. Pick a compactly supported test function $\eta$ supported on the set $B_R$. Using integration by parts while testing against $\eta^2 v^\beta$ for some $\beta \in \bR$ gives
    \begin{align*}
        \beta \int_{B_R} \eta^2 v^{\beta - 1}\langle A(x)\nabla v, \nabla  \rangle \;dV + 2 \int_{B_R}\eta v^\beta \langle A(x)\nabla v, \nabla \eta \rangle \;dV &= \int_{B_R} \phi \eta^2 v^\beta dV.
    \end{align*}
    Using Young's inequality, ellipticity and since $|\phi| \le R^{-2} v$
    \begin{align}
        |\beta|\lambda_0 \int_{B_R}  \eta^2 v^{\beta - 1}|\nabla v|^2dV &\le 2 \Lambda_0\int_{B_R} \eta v^\beta|\nabla v| |\nabla \eta|dV + R^{-2}\int_{B_R} \eta^2 v^{\beta + 1}dV\\
        &\le  \frac{|\beta| \lambda_0}{2}\int_{B_R} \eta^2 v^{\beta - 1}|\nabla v|^2 dV+\frac{2\Lambda_0^2}{|\beta| \lambda_0}\int_{B_R}v^{\beta+1} |\nabla \eta|^2 + R^{-2} \int_{B_R} \eta^2 v^{\beta + 1}dV\\
         \int_{B_R}  \eta^2 v^{\beta - 1}|\nabla v|^2dV &\le  \frac{4\Lambda_0^2}{|\beta|^2\lambda_0^2}\int_{B_R}v^{\beta+1} |\nabla \eta|^2 + \frac{2}{|\beta|R^2\lambda_0} \int_{B_R} \eta^2 v^{\beta + 1}dV.
    \end{align}
    Suppose $\beta \not=-1,0$. Then we set $\kappa= \frac{n}{n-1}$ and $A_\beta = 1 + \frac{(\beta + 1)^2}{\beta^2} + \frac{(\beta + 1)^2}{|\beta|}$. Then substituting our previous inequality into $\int |\nabla (\eta v^{(\beta + 1)/2})|$ gives 
    \begin{align*}
        \int_{B_R} \left|\nabla (\eta v^{(\beta + 1)/2}) \right|^2 &= \int_{B_R} |\nabla \eta|^2 v^{\beta +1} + 2\langle \nabla \eta, \nabla v \rangle \eta v^\beta + \eta^2 \left(\frac{\beta + 1}{2} \right)^2v^{\beta-1} |\nabla v|^2 dV\\
        &\le \int_{B_R} |\nabla \eta|^2 v^{\beta +1} + |\nabla \eta|^2  v^{\beta + 1} + \eta^2|\nabla v|^2 v^{\beta-1} + \left( \frac{\beta +1}{2}\right)^2 \eta^2 v^{\beta - 1} |\nabla v|^2 dV\\
        &\le(n, \lambda_0, \Lambda_0) C A_\beta \int_{B_R} (|\nabla \eta|^2 + R^{-2}\eta^2) v^{\beta + 1}dV.
    \end{align*}
    Specifically the constant $C$ does not depend on $\beta$. Now pick $R/2 < r < s < 2R/3$ and use Lemma~\ref{cutoff} to construct a function supported in $B_r$ and 0 outside $B_s$. Then we get
    \begin{align*}
        \int_{B_R} \left|\nabla (\eta v^{(\beta + 1)/2})\right|^2dV\le   \frac{CA_\beta}{(s-r)^2}\int_{B_s} v^{\beta + 1}dV.
    \end{align*}
    We apply Theorem~\ref{sobolev} to get (and inflate our constant again)
    \begin{align}
        \left(\int_{B_r}  |v|^{(\beta+1)\frac{n}{n-1}})\right)^{\frac{n-1}{n}} \le \frac{CA_\beta}{(s-r)^2}\int_{B_s} v^{\beta + 1}dV.
    \end{align}
    By  Theorem~\ref{distest} we know that the sets $B_r$ are comparable at all scales to geodesic balls, and since the volume form on $M$ is just $fdx$, where $dx$ is Euclidean Lebesgue measure , and $f$ is bounded away from 0 and infinity we get the averaged inequality
    \begin{align}\label{moser1}
        \left(\fint_{B_r}  |v|^{(\beta+1)\frac{n}{n-1}})\right)^{\frac{n-1}{n}} \le \frac{CA_\beta \mathrm{Vol}(B_s)}{(s-r)^2 \mathrm{Vol}(B_r)^{(n-1)/n}}\fint_{B_s} v^{\beta + 1}dV.
    \end{align}
     We now need to estimate the volume of the pseudo-balls. By \cite{collinstong2025} Proposition 2.6, we have that for a point $x_0$ and $r >0$
    \begin{align*}
        \frac12 S_{r^2}(u,x_0) \subseteq B_r(x_0) \subseteq S_{r^2}(u,x_0),
    \end{align*}
    where $S_{r^2}(u,x_0)$ is a section of a convex function at $x_0$ of height $r^2$. By convexity and uniform bounds on $\det D^2u$ we have $c(n, \lambda, \Lambda) r^n \le \mathrm{Vol}( S_{r^2}(u,x_0) \le C(n, \lambda, \Lambda) r^n$ therefore the extrinsic balls also have approximately $r^n$ volume as well. Therefore this gives us
    \begin{align}
        \left(\fint_{B_r}  |v|^{(\beta+1)\frac{n}{n-1}})\right)^{\frac{n-1}{n}} \le \frac{CA_\beta R}{(s-r)^2} \fint_{B_s} v^{\beta + 1}dV.
    \end{align}
    Now assume $\beta >-1$. We may pick a $\delta$ only depending on $n$ such that $\delta \kappa^j = \delta (\frac{n}{n-1})^j \not= 1$ for all $j > 0$. Then set
    \begin{align*}
        p_j = \delta \kappa^j \hspace{20pt} \beta_j = p+j - 1\hspace{20pt}r_j = \frac{R}{2} + \frac{R}{6}2^{-j}.
    \end{align*}
    Now take $\frac{1}{p_j}$ power on both sides and using an $L^p$ norm with an averaged integral gives
    \begin{align*}
        \lVert v \rVert_{L^{p_{j+1}}(B_{r_{j+1}}, dV)} \le (C \frac1R 4^j A_{\beta_j})^{1/p_{j}} \lVert v \rVert_{L^{p_j}(B_{r_j}, dV)}.
    \end{align*}
    Iterating gives
    \begin{align*}
        \lVert v \rVert_{L^{p_{j+1}}(B_{r_{j+1}}, dV)} \le \left(\prod_{k=1}^j(C\frac1R 4^k A_{\beta_k})^{1/p_{k}} \right)\lVert v \rVert_{L^{p_j}(B_{r_j}, dV)}.
    \end{align*}
    Taking logarithm gives $\sum_{k=1}^j \frac{1}{
    \delta \kappa^j}\left( \log C + k\log 4 + \log A_{\beta_k} + \log 1/R\right)$ in the coefficient. Since the inside part grows linearly this summation converges for fixed $R$. Therefore we can take a limit and get 
    \begin{align}
        \sup_{B_{R/2} } v\le C \lVert v \rVert_{L^{\delta}(B_{2R/3}, dV )}.
    \end{align}
    Now assume $\beta < - 1$. Take $\beta_j = -\delta\kappa^{j}$ and $\beta_j =p_j -1$. We do the same thing for $p_j < 0$ which flips the inequality 
    \begin{align*}
        \lVert v \rVert_{L^{p_{j+1}}(B_{r_{j+1}}, dV)} \ge (C \frac1R4^j A_{\beta_j})^{1/p_{j}} \lVert v \rVert_{L^{p_j}(B_{r_j}, dV)}.
    \end{align*}
    Doing the same thing as before
    \begin{align*}
        \inf_{B_{R/2}} v\ge C \lVert v \rVert_{L^{\delta}(B_{2R/3}, dV )}.
    \end{align*}
    Our goal is to now prove that for small $\delta$ there exists a constant $C$ such that
    \begin{align}\label{goal}
        \left(\fint_{B_{2R/3}} v^\delta \right)\left( \fint_{B_{2R/3}} v^{-\delta}\right) \le C.
    \end{align}
    Take $h = \log v$ and pick a cutoff function $\eta$ using Lemma~\ref{cutoff} such that $\eta \equiv 1$ on $B_r$ and supported on $B_{4r/3}$ with $4r/3<R$. Then $|\nabla \eta| \le \frac{C}{r}$. Then testing our equation against $\eta^2/v$ gives 
    \begin{align*}
        \int_{B_R}  \langle A \nabla v, \nabla (\eta^2/v) \rangle dV = \int_{B_R}\frac{\phi}{v}\eta^2 dV
    \end{align*}
    Rearranging gives
    \begin{align*}
        \int_{B_r} \eta^2\langle A \nabla h, \nabla h\rangle dV &= \int_{B_R} 2\langle A \nabla h, \nabla \eta \rangle - \frac{\phi}{v}dV\\
        \lambda \int_{B_r} \eta^2|\nabla h|^2 dV &= \int_{B_R}2 \Lambda \eta \langle \nabla h, \nabla \eta \rangle - \frac{\phi}{v}dV\\
        &\le \frac{\lambda}{2}\int_{B_R} \eta^2 |\nabla h|^2 dV + \frac{2\Lambda}{\lambda} \int_{B_R}|\nabla \eta|^2 dV + R^{-2} \int_{B_R} \eta^2 dV
    \end{align*}
    Therefore
\begin{align}\label{beta-1}
        \int_{B_R}\eta^2 |\nabla w|^2 dV &\le \frac{4\Lambda}{\lambda^2} \int_{B_R} |\nabla \eta|^2dV + \frac{2}{\lambda R^2}\int_{B_R} \eta^2 dV\\
        &\le Cr^{-2} \mathrm{Vol}(B_r)
    \end{align}
    Now for any $y$ and sufficiently small $r$ consider the the convex hull of pseudo-norm balls $\tilde B_r(y) = \mathrm{conv}(\{x \mid \langle y-x, y-x \rangle \le r^2\})$. We want to apply our Poincar\'e inequality, but to do this we first show that the pseudo-norm diameter of $\tilde B_{r}(y)$ is bounded by a constant multiple of $r$. 
    \begin{lemma}
        \begin{align*}
            \mathrm{diam}(B_r(0)) \le C r
        \end{align*}
    \end{lemma}
    \begin{proof}
        Write $\pi : M \to \Omega$ be the projection of the graph onto its domain. Then pick $p_1, \ldots, p_\ell \in \pi(B_r(0))$ and a convex combination $t_1, \ldots, t_\ell$ and write $q = t_1p_1 + \ldots + t_\ell p_\ell$. Then 
        \begin{align*}
            \sum_{i=1}^\ell t_i (q- p_i)\cdot (Du(q) - Du(p_i)) &=\sum_i -t_i p_i \cdot (Du(q) - Du(p_i)) - t_i q \cdot Du(p_i) + q \cdot Du(q)\\
            &=\sum_{i} t_i p_i \cdot Du(p_i) - t_i q \cdot Du(p_i)\\
            &= \sum_{i,j} t_it_j p_i \cdot Du(p_i) - \sum_{i,j}t_i t_j p_j \cdot Du(p_i)\\
            &= \frac12\sum_{i,j} t_i t_j (p_i - p_j) \cdot (Du(p_i) - Du(p_j)).
        \end{align*}
        Identifying $p, q$ with points on the graph $M$ we get
        \begin{align*}
            z(q,p_i) \le \frac12 \sum_{i,j} t_i t_j z(p_i, p_j).
        \end{align*}
        By the distance estimates in theorem \ref{distest} we have $\sqrt{z(p_i, p_j)} \le C(d(0, p_i) + d(0, p_j)) \le 2Cr$. Therefore $z(q,p_i) \le 2C^2 r^2$, and using the distance estimates again this implies $d(0,q) \le d(0, p_i) + d(p_i, q) \le (1 + \sqrt{2}C)r$. Therefore the convex hull of $B_r(0)$ is contained in a larger pseudo-norm ball of radius $2C(1 + \sqrt{2}C)r$. But using our equivalence of distances again, pseudo balls of radius $R$ have diameters bound by some constant multiple of $R$.
    \end{proof}
    Now we can safely apply Lemma \ref{poincare} with an inflated constant to get
    \begin{align}
        \left(\int_{\tilde B_r(y)} (w - w_{\tilde B_r(y)})^2dV\right)^{\frac12} \le  Cr \left(\int_{\tilde B_r(y)}|\nabla w|^2dV\right)^{1/2}.
    \end{align}
    Using Equation~\eqref{beta-1} and the previous result gives
    \begin{align*}
        \fint_{\tilde B_{r}(y)} (w - w_{\tilde B_{r}(y)})^2dV\le C r^2 \mathrm{Vol}(B_r) r^{-2}\frac1{Vol(B_r)} \le C.
    \end{align*}
    Since the volume form $dV = fdx$ for $f$ bounded away from 0 and infinity, and the balls using the pseudo-norm $z$ are equivalent at all scales to geodesic balls, we get $w \in BMO(B_{2R/3})$, we can apply Jonh-Nirenberg inequality to $w$ to get
    \begin{align}
        \fint B_{2R/3} \exp(\delta|w - w_{B_{2R/3}}|) \le C.
    \end{align}
    Since $v = e^w$, using sufficiently small $\delta$ we have now proved Equation~\eqref{goal}. Therefore
    \begin{align*}
        \sup_{B_{R/2}} v \le C \lVert v\rVert_{L^\delta(B_{2R/3})} \le C\lVert v\rVert_{L^{-\delta}(B_{2R/3})} \le C\inf_{B_{R/2}} v.
    \end{align*}
    Now taking $\ep \to 0$ and substituting back $w$ gives the desired Harnack inequality.
\end{proof}
\begin{remark}
  There are also many different variation of Moser-Harnack inequality, for example if we assume positivity of $\phi$ we can weaken the assumption to $\lVert \phi\rVert_{L^q}$ for $q > n$ and only require $\Delta v \ge - \phi$. 
\end{remark}
Finally now consider a solution to the linearized Monge-Amp\`ere equation
\begin{align*}
    U^{ij} w_{ij} = -\psi
\end{align*}
where $U^{ij} = f u^{ij}$. Then we have for any test function
\begin{align*}
    \int_M \mathrm{div}_u( \sqrt{f} \nabla_u w) \eta dV&=-  \int_M  \sqrt{f }\langle \nabla_u w, \nabla_u \eta \rangle dV\\
    &=-\int_M u^{ij} w_i \eta_j fdx\\
    &= -\int_M \frac{\psi}{\sqrt{f}}\eta dV.
\end{align*}
Therefore any solution to a linearized Monge-Amp\`ere equation satisfies a uniformly elliptic divergence form equation on $M$, so we can apply our result to prove a Harnack inequality. Standard methods gives $C^\alpha$ regularity, which immediately gives $C^{1,\alpha}$ regularity for the Monge-Amp\`ere equations.

\begin{cor}\label{cor:LMA-harnack}
Assume that $M$ and $B_R$ satisfy the geometric hypotheses of
Theorem~\ref{main}. Let $u$ be a convex solution of
$\det D^2u=f$, where $0<\lambda\le f\le\Lambda<\infty$, and
write $U^{ij}=fu^{ij}$. If $w\in C^2(B_R)$ is nonnegative and
satisfies
\[
    U^{ij}w_{ij}=-\psi \quad \text{in } B_R,
    \qquad \psi\in L^\infty(B_R),
\]
then
\begin{equation}\label{eq:LMA-harnack}
    \|w\|_{L^\infty(B_{R/2})}
    \le C\left(
        \inf_{B_{R/2}}w
        +R^2\|\psi\|_{L^\infty(B_R)}
    \right),
\end{equation}
where $C$ depends on $\lambda,\Lambda$ and the geometric
quantities appearing in Theorem~\ref{main}.
In particular, every nonnegative solution of $U^{ij}w_{ij}=0$
satisfies
\[
    \sup_{B_{R/2}}w\le C\inf_{B_{R/2}}w.
\]
\end{cor}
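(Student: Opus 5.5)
The plan is to reduce Corollary~\ref{cor:LMA-harnack} to Theorem~\ref{main} by rewriting the nondivergence linearized operator as a divergence-form operator on $(M,u_{ij})$ with coefficient $A=\sqrt f\,I$. The first step is the algebraic identity behind \eqref{eq:LMA}: the cofactor matrix is divergence free, $\partial_i U^{ij}=0$. This holds for $u\in C^3$ by the standard column-wise computation; for $u\in C^2$ one would mollify, or else interpret everything weakly, which is all the Moser argument needs. Hence $U^{ij}w_{ij}=\partial_i(U^{ij}w_j)$.

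Next I express this on $M$. In the coordinates $x$, the metric is $g_{ij}=u_{ij}$, with $\sqrt{\det g}=\sqrt f$ and $g^{ij}=u^{ij}$. Therefore
\[
\operatorname{div}_u(\sqrt f\,\nabla_u w)=\frac{1}{\sqrt f}\,\partial_i\bigl(\sqrt f\,\sqrt f\,u^{ij}w_j\bigr)=\frac{1}{\sqrt f}\,\partial_i(U^{ij}w_j)=-\frac{\psi}{\sqrt f}.
\]
This is exactly the test-function computation displayed before the corollary, now done pointwise. It also shows that the weak formulation used in the proof of Theorem~\ref{main} (testing against $\eta^2v^\beta$ and integrating by parts against $dV$) is valid.

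Then I check the hypotheses of Theorem~\ref{main} with $A=\sqrt f\,I$ and $\phi=\psi/\sqrt f$. The matrix $A$ is self-adjoint with respect to $g$, and $\sqrt\lambda\le A\le\sqrt\Lambda$, so $\lambda_0=\sqrt\lambda$ and $\Lambda_0=\sqrt\Lambda$. We also have $\phi\in L^\infty(B_R)$ with $\|\phi\|_{L^\infty}\le \lambda^{-1/2}\|\psi\|_{L^\infty}$. The function $w$ is $C^2$ and nonnegative, and the geometric hypotheses on $M$ and $B_R$ are assumed. Theorem~\ref{main} then gives
\[
\sup_{B_{R/2}}w\le C\bigl(\inf_{B_{R/2}}w+R^2\lambda^{-1/2}\|\psi\|_{L^\infty(B_R)}\bigr),
\]
and absorbing $\lambda^{-1/2}$ into $C$ yields \eqref{eq:LMA-harnack}, since $w\ge0$ gives $\|w\|_{L^\infty}=\sup w$. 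Setting $\psi=0$ gives the final claim.

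The main obstacle is the regularity in the divergence identity $\partial_iU^{ij}=0$ when $u$ is only $C^2$, because $\partial_i U^{ij}$ then need not exist classically. I would handle it distributionally: approximate $u$ by smooth convex $u_\varepsilon$, use $\int U_\varepsilon^{ij}w_{ij}\eta=-\int U_\varepsilon^{ij}w_j\eta_i$, and pass to the limit using $U_\varepsilon^{ij}\to U^{ij}$ locally uniformly. This yields the weak equation, which is the only form of the equation actually used in the Moser iteration.
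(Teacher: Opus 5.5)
Your proposal is correct and is essentially the paper's argument: the paper also rewrites $U^{ij}w_{ij}=-\psi$ as $\operatorname{div}_u(\sqrt f\,\nabla_u w)=-\psi/\sqrt f$, via the weak identity $\int_M \sqrt f\,\langle\nabla_u w,\nabla_u\eta\rangle\,dV=\int u^{ij}w_i\eta_j f\,dx$, and then applies Theorem~\ref{main} with $A=\sqrt f\,I$ and $\phi=\psi/\sqrt f$. Your explicit constants $\lambda_0=\sqrt\lambda$, $\Lambda_0=\sqrt\Lambda$, $\|\phi\|_{L^\infty}\le\lambda^{-1/2}\|\psi\|_{L^\infty}$ and your mollification justification of $\partial_iU^{ij}=0$ are careful additions that the paper leaves implicit; the regularity point is largely moot anyway, since the hypothesis on $\nabla H$ already involves third derivatives of $u$.
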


\section{Future directions}\label{Sec:Open}
 The geometric approach developed here suggests possible extensions
to other nonlinear elliptic equations. In particular, it would be
interesting to obtain similar Harnack estimates for the linearized
equations arising in optimal transport with nonquadratic costs.
A natural question is which assumptions on the cost and the
densities allow the geometric arguments used here to carry over. In particular, it would be interesting to understand whether
the Ma--Trudinger--Wang condition (see \cite{MTW05}) on the cost allows analogous Harnack estimates to be obtained by this geometric approach.

Another direction is to better understand the geometric hypotheses
in the nonconstant-density case. It would be useful to identify
natural analytic conditions on the density and the potential that
ensure the curvature bounds required by our argument. More generally,
one may ask whether these hypotheses can be weakened while retaining
a geometric proof of the Harnack inequality.


\section*{Acknowledgments}
The authors would like to thank Professor Freid Tong for several helpful discussions related to this paper.

\end{document}